\documentclass[11pt]{article}

\usepackage[a4paper,margin=1in]{geometry}
\usepackage{amsmath,amssymb,amsthm,mathtools}
\usepackage{enumitem}
\usepackage{booktabs}
\usepackage{array}
\usepackage{microtype}
\usepackage[hidelinks]{hyperref}
\usepackage{xcolor}

\usepackage{authblk}

\newtheorem{theorem}{Theorem}[section]
\newtheorem{lemma}[theorem]{Lemma}

\newtheorem{proposition}[theorem]{Proposition}
\newtheorem{corollary}[theorem]{Corollary}
\theoremstyle{remark}

\newcommand{\stirling}[2]{\genfrac{[}{]}{0pt}{}{#1}{#2}}

\newcommand{\Ap}{\operatorname{Ap}}
\newcommand{\lcm}{\operatorname{lcm}}
\newcommand{\Z}{\mathbb Z}

\title{Telescopic Structure of Numerical Semigroups Generated by\\
Unsigned Stirling Numbers of the First Kind}
\author[1,2]{Takao Komatsu}
\author[3,*]{Kyunghwan Song}

\affil[1]{Department of Mathematics, Institute of Science Tokyo,
	Tokyo 152-8551, Japan}

\affil[2]{Jeju Glocal University Project of Jeju National University,
	Jeju National University, 102, Jejudaehak-ro, Jeju-si,
	Jeju-do 63243, Republic of Korea}

\affil[3]{Department of Mathematics, Jeju National University,
	102 Jejudaehakro Jeju, 63243, Republic of Korea}

\affil[*]{Corresponding author: khsong@jejunu.ac.kr}
\date{}

\begin{document}
\maketitle

\begin{abstract}
Let
\[
S_n=\left\langle \stirling n1,\ldots,\stirling n{n-1}\right\rangle
\]
be the numerical semigroup generated by the nontrivial unsigned Stirling
numbers of the first kind in the $n$th row.  Writing
$a=\stirling n{n-1}=\binom n2$ and
$b_j=\stirling n{n-2j}$, we determine the complete gcd filtration of
the reduced generating system.  More precisely, if
\[
M_j=\frac12\operatorname{lcm}\{m\ge1:\varphi(m)\le2j\},
\]
then
\[
\gcd(a,b_1,\ldots,b_j)=\frac{a}{\gcd(a,M_j)}.
\]
The proof uses prime-power block polynomials and exact $p$-adic support
at threshold degrees.  We then establish a divisor-support property for
canonical mixed-radix reductions and combine it with an
elementary-symmetric growth estimate to prove that, after inactive
generators are removed, the resulting generating sequence is
telescopic.  Consequently, the Ap\'ery set of $S_n$ is rectangular,
yielding an explicit Frobenius formula.  The same structure shows that
$S_n$ is free and symmetric and gives explicit formulas for its genus
and conductor, while its type is one.
\end{abstract}

\medskip
\noindent\textbf{Keywords:} numerical semigroup; unsigned Stirling numbers of the first kind; telescopic semigroup; Ap\'ery set; Frobenius number.

\smallskip
\noindent\textbf{2020 Mathematics Subject Classification:} 20M14, 11D07, 11B73.

\section{Introduction}\label{sec:Introduction}

Let $A=\{a_1,\ldots,a_k\}$ be a finite set of positive integers with
$\gcd(A)=1$, and let
$$
\langle A\rangle
=
\left\{
\lambda_1a_1+\cdots+\lambda_ka_k:
\lambda_1,\ldots,\lambda_k\in\mathbb Z_{\ge0}
\right\}.
$$
Then $\langle A\rangle$ is a numerical semigroup, and its Frobenius
number is the largest nonnegative integer not belonging to
$\langle A\rangle$.  Although the Frobenius problem is classical, no general analogue of the
two-generator formula is available for three or more generators, and
explicit formulas are typically obtained only for families with additional
structure.  Consequently, an important direction is to identify
structured families of generators for which the arithmetic of the
semigroup can be described explicitly; see, for example,
\cite{Ramirez2005,Rosales2009}.

Combinatorial arrays provide a natural source of such structured
families.  In particular, numerical semigroups generated by entries
coming from binomial-type sequences exhibit an interaction between
the arithmetic of the coefficients and the additive structure of the
resulting semigroup.  Hwang and Song studied Frobenius problems
associated with consecutive binomial coefficients and obtained
explicit formulas in that setting \cite{HwangSong26}.  
Here, we use another standard array, the (unsigned) Stirling numbers of the first kind $\stirling n{k}$. 
They arise from the rising factorial:  
$$
x(x-1)\cdots(x-n+1)=\sum_{k=0}^n(-1)^{n-k}\stirling n{k}x^k. 
$$
They satisfy the recurrence relation 
\begin{equation} 
\stirling{n}{k}=\stirling{n-1}{k-1}+(n-1)\stirling{n-1}{k}
\label{rec:st1}
\end{equation} 
with $\stirling{n}{k}=0$ ($k>n$), $\stirling{n}{0}=0$ ($n>0$) and $\stirling{0}{0}=1$.  
For background on Stirling numbers of the first kind and their generating functions (see, e.g., \cite[Chapter V]{Comtet1974}). 
We consider the set 
$$
A_n=\left\{\stirling n1,\ldots,\stirling n{n-1}\right\}.
$$ 
Since $\stirling n{n}=1$, it should be removed from this set.  
Let $S_n$ be the set of integers that can be expressed as a non-negative integer linear combination of the elements of this set: 
\begin{equation}\label{sn}
S_n=\left\langle
\stirling n1,\stirling n2,\ldots,\stirling n{n-1}
\right\rangle. 
\end{equation} 
We also assume that $n\ge 4$. Otherwise, the result becomes trivial.  
We use standard terminology concerning numerical semigroups, Ap\'ery sets,
and Frobenius numbers as in \cite{Rosales2009}. 
The submonoid $S_n$ is a numerical semigroup precisely when $\gcd(A_n)=1$. When $S_n$ is a numerical semigroup, the set of all non-negative integers excluding the elements of $S_n$, that is, the set of gaps, is a finite set. Its largest element is called the Frobenius number, and in this paper, it is denoted as $F(S_n)$.  That is, for the set of non-negative integers $\mathbb N_0$,   
$$
F(S_n)=\max\bigl(\mathbb N_0\setminus S_n\bigr).
$$
If $\mathfrak b = \min A_n$, the Ap\'ery set with respect to $\mathfrak b$ is
$$
\Ap(S_n;\mathfrak b) = \{m_0,m_1,\ldots,m_{\mathfrak b-1}\},
$$
where $m_i$ is the least member of $S_n$ congruent to
$i\pmod {\mathfrak{b}}$. 
Then, the Frobenius number is obtained from the elements of the Ap\'ery set as 
$$
F(S_n)=\max\{m_i:i=0,1,\dots,\mathfrak b-1\}-\mathfrak{b}. 
$$
We denote by
$$
\mathrm{g}(S_n)=\#(\mathbb N_0\setminus S_n)
$$
the {\it genus} of $S_n$, and by
$$
c(S_n)=F(S_n)+1
$$
its {\it conductor}.  We also use $\mathrm{t}(S_n)$ for the {\it type} of $S_n$, that is the cadinality of the set of all the integer elements $x$ (pseudo-Frobenius numbers) such that $x\not\in S_n$ and $x+s\in S_n$ for all $s\in S_n\backslash\{0\}$.

By Lemmas \ref{lem:min1} and \ref{lem:odd-defect} below, the generating system of the numerical semigroup $S_n$ can be reduced as 
$$
S_n=\langle a,b_1,\ldots,b_{s}\rangle,
$$
where 
$$
a=\stirling n{n-1}=\binom n2,
\qquad
b_j=\stirling n{n-2j},
\qquad
1\le j\le s=\left\lfloor\frac{n-1}{2}\right\rfloor.
$$
The central problem is then to understand the successive gcds
$$
d_j=\gcd(a,b_1,\ldots,b_j)
$$
and the quotients $q_j=d_{j-1}/d_j$.

The first main result gives a complete description of this gcd
filtration.  Define
$$
M_j
=
\frac12\operatorname{lcm}
\{m\ge1:\varphi(m)\le2j\},
\qquad M_0=1,
$$
where $\varphi$ denotes Euler's totient function.  We prove that
$$
d_j=\frac{a}{\gcd(a,M_j)}
$$
for every $1\le j\le s$, and hence
$$
q_j=
\frac{\gcd(a,M_j)}
     {\gcd(a,M_{j-1})}.
$$
Thus the complete gcd filtration of a row of Stirling numbers is
governed by the lcm of the integers whose Euler totients are bounded
by $2j$.  As a first consequence, $a\mid M_s$, so $d_s=1$ and hence
$S_n$ is indeed a numerical semigroup.

The second part of the paper shows that the gcd filtration controls
much more than the final gcd.  For each $j$, we construct a canonical
mixed-radix reduction of $q_jb_j$ modulo the preceding generators.
A key support theorem shows that a nonzero canonical digit at an
index $h<j$ can occur only when $h$ is a proper divisor of $j$.
Combining this divisor-support property with an elementary-symmetric
growth estimate yields
$$
q_jb_j\in
\langle a,b_1,\ldots,b_{j-1}\rangle.
$$
After the inactive indices with $q_j=1$ are deleted, the remaining
generating sequence is therefore telescopic.

This telescopic structure gives a rectangular description of the
Ap\'ery set:
$$
\operatorname{Ap}(S_n,a)
=
\left\{
\sum_{j=1}^{s}x_jb_j:
0\le x_j<q_j
\right\}.
$$
Consequently, the Frobenius number is given by 
$$
F(S_n)
=
\sum_{j=1}^{s}(q_j-1)b_j-a.
$$
Moreover, the resulting numerical semigroup is free and hence
symmetric.  We therefore also obtain that its type is one and $S_n$ is {\it symmetric}, that is, 
$$
\mathrm g(S_n)=\frac{F(S_n)+1}{2}.
$$
Thus the arithmetic filtration not only determines the Frobenius
number but also yields the principal structural invariants of $S_n$.

The proof combines $p$-adic information on the coefficients of
$$
P_n(T)=\prod_{r=1}^{n-1}(1+rT)
$$
with prime-power block polynomials.  Exact support at the threshold
degrees determines the successive gcds, while the squarefreeness of
the quotients $q_j$ forces the divisor-support phenomenon in the
canonical reductions.  This provides the bridge from the arithmetic
of Stirling numbers to the telescopic structure of the associated
numerical semigroups.

The paper is organized as follows.  Section~2 introduces the reduced
generating system and states the main results.  Section~3 develops the
$p$-adic analysis and proves the exact gcd filtration.  Section~4
establishes the divisor-support and positivity results needed for the
canonical reductions.  The final section proves telescopicity and
derives the rectangular Ap\'ery set, the Frobenius formula, and the
remaining structural consequences.

\section{Statement of the main result}\label{sec:main}

The rows of the Stirling numbers of the first kind have strong divisibility and $p$-adic properties, but these properties are considerably less transparent from the viewpoint of additive semigroup generation. 

\begin{lemma}  
For integers $n$ and $m$ with $n\ge 4$ and $1\le m\le n-1$, 
$$
\stirling{n}{n-1}\le\stirling{n}{n-m}\,. 
$$ 
\label{lem:min1}
\end{lemma}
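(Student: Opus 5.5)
We need $\binom n2=\stirling n{n-1}\le \stirling n{n-m}$ for all $1\le m\le n-1$ when $n\ge4$. The plan is to use the elementary symmetric description
\[
\stirling n{n-m}=e_m(1,2,\ldots,n-1),
\]
which follows from $P_n(T)=\prod_{r=1}^{n-1}(1+rT)=\sum_{m}\stirling n{n-m}T^m$. Then compare each $e_m$ with $e_1=\binom n2$.

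The case $m=1$ is equality. For the endpoint $m=n-1$ we have $\stirling n1=(n-1)!$. We show $(n-1)!\ge\binom n2$ for $n\ge4$ by induction: at $n=4$ it reads $6\ge6$. Passing from $n$ to $n+1$, the left side is multiplied by $n$ and the right side by $(n+1)/(n-1)\le 2$, so the inequality persists.

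For $2\le m\le n-2$ we use a direct lower bound. Expand $e_m(1,\ldots,n-1)$ and keep only the monomials $r\cdot\prod_{t\in U}t$, where $r$ ranges over $1,\ldots,n-1$ and $U$ is an $(m-1)$-subset of the other indices. A cleaner way is to fix one $(m-1)$-set $U$ of large indices and sum over $r\notin U$. Since $m\le n-2$, we can choose $U\subseteq\{2,\ldots,n-1\}$ with $|U|=m-1$, so $1\notin U$. Then
\[
e_m\ \ge\ \Bigl(\prod_{t\in U}t\Bigr)\sum_{r\notin U} r\ \ge\ \Bigl(\prod_{t\in U}t\Bigr)\Bigl(\binom n2-\sum_{t\in U}t\Bigr).
\]
It therefore suffices that $\prod_{t\in U}t\cdot\bigl(\binom n2-\sum_{t\in U} t\bigr)\ge\binom n2$.

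To get this, take $U=\{n-1\}$ when $m=2$. The bound becomes $(n-1)\bigl(\binom n2-(n-1)\bigr)\ge\binom n2$. This is equivalent to $(n-2)\binom n2\ge(n-1)^2$, i.e.\ $n(n-2)\ge 2(n-1)$, which holds for $n\ge4$. For larger $m$, set $\Pi=\prod_{t\in U}t$ and $\Sigma=\sum_{t\in U}t$, and choose $U$ to consist of elements $\ge2$. Then $\Pi\ge\Sigma$, by the elementary fact that the product of integers $\ge2$ is at least their sum. Since $1\notin U$, we also have $\binom n2-\Sigma\ge1$. Using $\Pi\ge2$ and $\Pi\ge\Sigma$, the inequality $\Pi(\binom n2-\Sigma)\ge\binom n2$ is equivalent to $(\Pi-1)\binom n2\ge\Pi\Sigma$. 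This holds because $\Sigma\le\binom n2-1$ and $\Pi\ge \Sigma+\ldots$ needs care.

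The main obstacle is exactly this inequality. A simpler alternative, which I would actually adopt, is monotonicity along the recurrence: prove by induction on $n$ that $\stirling n{k}\ge\stirling n{n-1}$ for $1\le k\le n-1$. By \eqref{rec:st1}, $\stirling{n}{k}\ge(n-1)\stirling{n-1}{k}$ whenever $k\le n-2$. By the induction hypothesis this is at least $(n-1)\binom{n-1}2$, and $(n-1)\binom{n-1}{2}\ge\binom n2$ iff $(n-1)(n-2)\ge n$, which is true for $n\ge4$. The base case $n=4$ is the row $6,11,6$, all of which are $\ge6$. The induction needs the hypothesis at $n-1=3$, where the row is $2,3$ and $\stirling 32=3$ is the minimum of the nontrivial entries, which is still fine since $\stirling31=2<3$ is only needed for $n\ge5$. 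To avoid relying on $n-1=3$, start the induction at $n=5$ after checking $n=4$ directly, and handle $k=1$ through the factorial bound above.
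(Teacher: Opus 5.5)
Your adopted argument is correct, but it takes a different route from the paper.

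\textbf{The paper's route.} It cites the Hammersley--Erd\H{o}s unimodality of the row $k\mapsto\stirling nk$. So the minimum over $1\le k\le n-1$ must occur at an endpoint, and it only remains to compare $\stirling n1=(n-1)!$ with $\stirling n{n-1}=\binom n2$.

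\textbf{Your route.} You induct on $n$. The base row $n=4$ is $6,11,6$. For $n\ge5$ and $1\le k\le n-2$, drop the nonnegative term $\stirling{n-1}{k-1}$ in the recurrence \eqref{rec:st1} and use the hypothesis at $n-1\ge4$:
\[
\stirling nk\ \ge\ (n-1)\stirling{n-1}k\ \ge\ (n-1)\binom{n-1}2\ \ge\ \binom n2 .
\]
The last step is equivalent to $(n-1)(n-2)\ge n$. The case $k=n-1$ is equality.

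\textbf{Comparison.} Your proof is self-contained and needs no external theorem. The paper's citation gives the full unimodal shape of the row, which is more than this lemma requires.

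\textbf{Cleanup for the write-up.}
\begin{itemize}
\item Delete the abandoned $e_m$ approach. Its general-$m$ step is left unfinished (``needs care'') and contributes nothing to the final proof.
\item Correct the remark about $n=3$. There $\stirling31=2<3=\stirling32$, so $\stirling 32$ is \emph{not} the minimum of the nontrivial entries. The statement genuinely fails at $n=3$, which is exactly why the induction must be anchored at $n=4$ with the step using only $n-1\ge4$.
\item Drop the separate factorial argument for $k=1$. Since $\stirling{n-1}0=0$ and $1\le n-2$, the recurrence step already covers $k=1$.
\end{itemize}
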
  
\begin{proof}
In 1951, Hammersley \cite{Hammersley51} showed that the sequence of $\stirling{n}{k}$ ($k=1,\dots,n$) is unimodal, where the maximizing index $K_n$ is unique (later proved by Erd\H{o}s \cite{Erdos53}) and given by the semi-closed-form formula. Namely, 
$$ 
\stirling{n}{1}<\stirling{n}{2}<\dots<\stirling{n}{K_n-1}\le\stirling{n}{K_n}>\stirling{n}{K_n+1} 
>\dots>\stirling{n}{n-2}>\stirling{n}{n-1}>\stirling{n}{n}=1 
$$ 
with $K_n=K_{n-1}$ or $K_n=K_{n-1}+1$.  
Since 
\begin{equation}
\stirling{n}{n-1}=\binom{n}{2}\quad\hbox{and}\quad \stirling{n}{1}=(n-1)!\,,
\label{eq:stnn11}
\end{equation}
for $n\ge 4$, we have 
$$
\stirling{n}{n-1}\le \stirling{n}{1}
$$
(the equality holds only when $n=4$). 
\end{proof}

Let
$$
P_n(T):=\prod_{r=1}^{n-1}(1+rT)
       =\sum_{\ell=0}^{n-1}E_\ell(n)T^\ell.
$$
Then
$$
E_\ell(n)=e_\ell(1,\ldots,n-1)
         =\stirling n{n-\ell},
$$
which is the standard elementary-symmetric realization of the unsigned
Stirling numbers of the first kind; see, for example,
\cite[Chapter V]{Comtet1974}.
For convenience, we let
$$
a:=\stirling n{n-1}=\binom n2
$$
and
$$
b_j:=E_{2j}(n)=\stirling n{n-2j},
\qquad
1\le j\le s:=\left\lfloor\frac{n-1}{2}\right\rfloor.
$$

We next justify the reduction to the even-defect generators. That is, the generating set in (\ref{sn}) is not minimal. Some elements in $A_n$ can be expressed by other elements of $A_n$.  

\begin{lemma}\label{lem:odd-defect}
For every odd integer $\ell$ with $1\le \ell\le n-1$,
$$
\binom n2=a\mid E_\ell(n)=\stirling n{n-\ell}.
$$
Consequently,
$$
\left\langle
\stirling n1,\ldots,\stirling n{n-1}
\right\rangle
=
\langle a,b_1,\ldots,b_s\rangle.
$$
\end{lemma}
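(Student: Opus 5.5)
The plan is to use the symmetry of the multiset $\{1,\ldots,n-1\}$ under $r\mapsto n-r$. Working modulo $n$, we have $1+rT\equiv 1-(n-r)T$, so
\[
P_n(T)\equiv \prod_{r=1}^{n-1}(1-rT)=P_n(-T)\pmod n .
\]
Comparing coefficients gives $E_\ell(n)\equiv (-1)^\ell E_\ell(n)$, so for odd $\ell$ we get $n\mid 2E_\ell(n)$. This alone is too weak, because the factor $2$ is lost and we also need divisibility by $n-1$. I will therefore use a polynomial identity instead of a mere congruence. Consider
\[
Q(x)=\prod_{r=1}^{n-1}(x+r)=\sum_k \stirling nk x^{k-1},
\]
using $x(x+1)\cdots(x+n-1)=\sum_k\stirling nk x^k$. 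The substitution $x\mapsto -n-x$ permutes the factors up to sign: $\prod_{r=1}^{n-1}(-n-x+r)=(-1)^{n-1}Q(x)$. Hence $Q(-n-x)=(-1)^{n-1}Q(x)$, so $Q$ is symmetric or antisymmetric about $x=-n/2$.

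The key step is to write $Q(x)=R(y)$ with $y=x+n/2$. Then $R(-y)=(-1)^{n-1}R(y)$, so $R$ contains only powers $y^{i}$ with $i\equiv n-1\pmod 2$. Expanding $y^i=(x+n/2)^i$, the coefficient of $x^{k-1}$ is
\[
\stirling nk=\sum_i c_i\binom{i}{k-1}(n/2)^{i-k+1}.
\]
The parity constraint on $i$ relates directly to odd $\ell=n-k$. I find this route messy because of the half-integers, so I would use it only as a fallback and argue more directly with the factor $\binom n2=n(n-1)/2$ as follows.

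For the direct argument I separate the factors $n-1$ and $n$, treating the prime-power cases by coprimality since $\gcd(n,n-1)=1$. Divisibility by $n-1$ should come from $\stirling nk=\stirling{n-1}{k-1}+(n-1)\stirling{n-1}k$, which gives $\stirling nk\equiv\stirling{n-1}{k-1}\pmod{n-1}$. I then apply the symmetric-polynomial trick to the row $n-1$: $E_{\ell}(n)=e_\ell(1,\ldots,n-1)$, and modulo $n-1$ the last variable vanishes, so $E_\ell(n)\equiv e_\ell(1,\ldots,n-2)$. The multiset $\{1,\ldots,n-2\}$ is symmetric under $r\mapsto (n-1)-r$, so $\prod_{r\le n-2}(1+rT)\equiv\prod(1-rT)\pmod{n-1}$, which gives $(n-1)\mid 2e_\ell$ for odd $\ell$. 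Similarly $n\mid 2E_\ell(n)$. Together these give $n(n-1)\mid 2E_\ell(n)$, and dividing by $2$ yields exactly $\binom n2\mid E_\ell(n)$. This is precisely what is wanted, so the factor-of-$2$ loss is harmless.

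The remaining issue is rigor: I must justify that $n\mid 2E_\ell$ and $(n-1)\mid 2E'_\ell$ combine to give $n(n-1)\mid 2E_\ell$. This holds because $\gcd(n,n-1)=1$, and both divisibilities are of the same quantity $2E_\ell(n)$ since $E_\ell(n)\equiv e_\ell(1,\ldots,n-2)\pmod{n-1}$. Finally, for the semigroup equality, each odd-defect generator is a nonnegative multiple of $a$, with $\ell=1$ giving $a$ itself. The even-defect generators are exactly the $b_j$ with $2j\le n-1$, and $\stirling n{n-\ell}$ for $\ell=n-1$ is $\stirling n1$, which is covered by whichever parity it has. So the two generating sets produce the same semigroup. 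I expect the main obstacle to be only the bookkeeping of the factor $2$, which the argument above resolves.
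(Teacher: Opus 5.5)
Your argument is correct and is essentially the paper's proof: the involution $r\mapsto n-r$ gives $P_n(T)\equiv P_n(-T)\pmod n$, dropping the factor $1+(n-1)T$ and using $r\mapsto n-1-r$ gives the same congruence modulo $n-1$, and $\gcd(n,n-1)=1$ then yields $n(n-1)\mid 2E_\ell(n)$ for odd $\ell$, i.e.\ $\binom n2\mid E_\ell(n)$. The fallback via $Q(x)$ and its symmetry about $x=-n/2$ is never needed and can simply be omitted.
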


\begin{proof}
Recall
$$
P_n(T)=\prod_{r=1}^{n-1}(1+rT).
$$
The involution $r\mapsto n-r$ permutes $\{1,\ldots,n-1\}$ modulo $n$, and hence
$$
P_n(T)\equiv \prod_{r=1}^{n-1}(1-rT) = P_n(-T) \pmod n.
$$
The factor $1+(n-1)T$ is congruent to $1$ modulo $n-1$, while $r\mapsto n-1-r$ permutes $\{1,\ldots,n-2\}$ modulo $n-1$.  Therefore
$$
P_n(T)\equiv \prod_{r=1}^{n-2}(1-rT) \equiv P_n(-T) \pmod{n-1}.
$$
Since $\gcd(n,n-1)=1$, we have
$$
P_n(T)-P_n(-T)\equiv0\pmod{n(n-1)}.
$$  
For odd $\ell$, the coefficient of $T^\ell$ in this difference is $2E_\ell(n)$.  
Hence $n(n-1)\mid2E_\ell(n),$ or equivalently
$$
\binom n2\mid E_\ell(n).
$$
Thus every odd-defect generator is already a multiple of
$a=\binom n2$, and the asserted reduction follows.
\end{proof}

Now, for the Stirling numerical set $S_n=\langle a,b_1,\ldots,b_s\rangle$, let 
$$
d_0:=a,\quad
d_j:=\gcd(a,b_1,\ldots,b_j),
\quad
q_j:=\frac{d_{j-1}}{d_j}\quad(j=1,2,\dots,s).
$$
Also put
$$
M_j:=\frac12
\lcm\{m\ge1:\varphi(m)\le2j\}\quad(j=1,2,\dots,s),
\qquad M_0:=1.
$$

Our main theorems and corollaries are the following.

\begin{theorem}[Exact gcd filtration]\label{thm:main}
For every $n\ge4$ and
$1\le j\le s=\lfloor(n-1)/2\rfloor$, one has
\[
d_j=\frac{a}{\gcd(a,M_j)}.
\]
Consequently,
\[
q_j=\frac{\gcd(a,M_j)}{\gcd(a,M_{j-1})}.
\]
\end{theorem}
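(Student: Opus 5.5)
Since $d_j=\gcd(a,b_1,\ldots,b_j)$ divides $a$, the claim is equivalent to a statement prime power by prime power. For each prime $p$ with $p^{e}\,\|\,a$ we must show that
\[
v_p(d_j)=\max\bigl(0,\;e-v_p(M_j)\bigr).
\]
First I compute $v_p(M_j)$. The largest power $p^t$ with $\varphi(p^t)=p^{t-1}(p-1)\le 2j$ determines $v_p(\lcm\{m:\varphi(m)\le 2j\})$. The factor $\tfrac12$ only lowers the exponent of $2$ by one, and for $j\ge1$ that exponent is at least $1$ since $\varphi(4)=2$. So the target identity reads
\[
\min\bigl(e,\;v_p(b_1),\ldots,v_p(b_j)\bigr)=e-\min(e,t_p(j)),
\]
where $t_p(j)=v_p(M_j)$ is read off from the threshold degrees. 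For odd $p$ these are $\ell=p^{t-1}(p-1)$; for $p=2$ they are $\ell=2^{t}$, after the shift by $\tfrac12$. Two inequalities follow: a lower bound $v_p(b_i)\ge e-t_p(j)$ for all $i\le j$, and an upper bound in which some $b_i$ with $i\le j$ attains equality, at least when $t_p(j)\le e$.

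To study $b_i=E_{2i}(n)$ modulo $p^e$, I use prime-power block polynomials. Since $p^e$ divides $n(n-1)/2$, either $p^e\mid n$ or $p^e\mid n-1$ (for odd $p$; for $p=2$ use $2^{e+1}\mid n(n-1)$). Assume $p^{e}\mid N$ with $N\in\{n,n-1\}$. The product $\prod_{r=1}^{N-1}(1+rT)$ reduces modulo $p^e$ to a power of the block $B(T)=\prod_{r=1}^{p^e-1}(1+rT)$, up to the harmless factor $1+(n-1)T\equiv 1$ in the case $N=n-1$, as in Lemma~\ref{lem:odd-defect}. The block in turn factors by $p$-adic valuation of $r$. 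For units, the standard computation gives
\[
\prod_{r\in(\Z/p^k)^\times}(1+rT)\equiv(1-T^{p-1})^{p^{k-1}}
\]
in a suitable $p$-adic sense for odd $p$, with $(1-T^2)^{2^{k-2}}$-type behaviour for $p=2$. The nonunit layers give the same shape in $p^uT$.

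From this factorization I read off two facts. The coefficient of $T^\ell$ in $B(T)^{N/p^e}$ is divisible by $p^{e-t}$ whenever $\ell<\varphi(p^{t+1})$ (resp.\ the $2$-adic threshold). At the threshold degree $\ell=\varphi(p^{t})$ the valuation is exactly $e-t$, with a unit leading term coming from the binomial coefficient $\binom{p^{k-1}}{1}$-type contributions of the top layer. These two facts give the lower and upper bounds respectively, since the threshold degrees are even (for odd $p$, $p-1$ is even).

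The main obstacle is the exact-support step at the threshold degree. One must control the $p$-adic valuation of the coefficient of $T^{\varphi(p^t)}$ in a product of layer polynomials raised to the power $N/p^e$, and show that no cancellation among layers or mixed terms pushes the valuation above $e-t$. I would handle this by proving a Kummer/Lucas-type valuation for $\binom{p^{k-1}m}{i}$ and showing that the unique minimal-valuation contribution comes from choosing a single factor $T^{p-1}$-power in one layer. The prime $2$ needs separate bookkeeping; it is responsible for the $\tfrac12$ in $M_j$. Once $d_j$ is established, the formula for $q_j$ is immediate from $q_j=d_{j-1}/d_j$, using $M_0=1$ and $d_0=a$.
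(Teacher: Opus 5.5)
Your proposal is correct and follows essentially the same route as the paper. Both arguments use the same chain of steps: a prime-by-prime reduction with $\nu_p(M_j)$ read off from the thresholds $\varphi(p^t)$ (shifted by one for $p=2$); the block transfer $P_n\equiv H_{p,\alpha}(T)^{M}$ modulo $p^{\alpha}$ (block length and modulus $2^{\alpha+1}$ for $p=2$); the unit-layer congruence $U_{p,\beta}\equiv(1-T^{p-1})^{p^{\beta-1}}$; a support bound for the lower inequality; and the exact threshold valuation for the upper one.

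The only difference is organizational. The paper controls the cross terms at the threshold degree by induction on $\beta$ via $H_{p,\beta}(T)=U_{p,\beta}(T)H_{p,\beta-1}(pT)$. You instead expand all layers $U(p^uT)$ at once and use Kummer's theorem. Your version also works, because a degree-$d$ contribution from a layer $u\ge1$ carries an extra factor $p^{ud}$, and this outweighs any binomial valuation it saves.
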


\begin{corollary}\label{cor:numerical}
For every $n\ge4$, $\gcd\left(\stirling n1,\stirling n2,\ldots,\stirling n{n-1}\right)=1.$
Consequently, $S_n$ is a numerical semigroup.
\end{corollary}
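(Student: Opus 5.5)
The statement to prove is Corollary~\ref{cor:numerical}. The plan is to deduce it from Theorem~\ref{thm:main} at $j=s$, together with the reduction in Lemma~\ref{lem:odd-defect}. By that lemma, every odd-defect entry $\stirling n{n-\ell}$ is a multiple of $a$. Hence
\[
\gcd\left(\stirling n1,\ldots,\stirling n{n-1}\right)=\gcd(a,b_1,\ldots,b_s)=d_s .
\]
Theorem~\ref{thm:main} gives $d_s=a/\gcd(a,M_s)$, so it suffices to show $a\mid M_s$, i.e.\ $n(n-1)\mid \lcm\{m\ge1:\varphi(m)\le 2s\}$.

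For the divisibility, use $2s\ge n-2$: we have $2s=n-1$ for $n$ odd and $2s=n-2$ for $n$ even. Write $L=\lcm\{m:\varphi(m)\le 2s\}$ and split by parity of $n$.

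\emph{Case $n$ odd.} Here $2s=n-1$. Since $\varphi(n)\le n-1$ and $\varphi(n-1)\le n-2$, both $n$ and $n-1$ lie in the defining set of $L$. As $\gcd(n,n-1)=1$, we get $n(n-1)\mid L$. Also $n-1$ is even, so $\binom n2=n\cdot\frac{n-1}{2}$ divides $L/2=M_s$.

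\emph{Case $n$ even.} Here $2s=n-2$.
\begin{itemize}
\item The factor $n-1$ is odd. If $n-1$ is composite, or $n-1=1$, then $\varphi(n-1)\le n-3$. The problematic case is $n-1$ prime, where $\varphi(n-1)=n-2\le 2s$, which is still fine. So $n-1\mid L$ in all cases.
\item For the factor $n/2$: $\varphi(n)\le n/2\le n-2$ since $n$ is even and $n\ge4$. Hence $n\mid L$.
\item Since $\gcd(n,n-1)=1$, it follows that $2\binom n2=n(n-1)\mid L$, i.e.\ $a\mid M_s$.
\end{itemize}

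Therefore $\gcd(a,M_s)=a$, so $d_s=1$, and the reduced generators have gcd $1$. A submonoid of $\mathbb N_0$ with coprime generators has finite complement; this is standard, via a B\'ezout combination and the Ap\'ery-set description with respect to $a$. Hence $S_n$ is a numerical semigroup.

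The only point needing care is checking that the totient bound $\varphi(m)\le 2s$ captures both $n$ and $n-1$ in both parity cases. The tight instance is $n$ even with $n-1$ prime, where $\varphi(n-1)=2s$ exactly, and the bound just suffices. The factor $\frac12$ in $M_j$ is absorbed because $\lcm$ contains the full product $n(n-1)$ rather than just $\binom n2$. The substantive content lies entirely in Theorem~\ref{thm:main}, which is taken as given here.
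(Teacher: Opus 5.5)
Your proof is correct, but it reaches $a\mid M_s$ by a more direct route than the paper.

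\emph{The paper's route.} It argues prime by prime. For each $p^\alpha\Vert a$ it shows $\mu_p(s)\ge\alpha$. For odd $p$ this comes from $\varphi(p^\alpha)\le 2s$, split into three subcases by the parity of $n$ and by which of $n,n-1$ carries $p^\alpha$. For $p=2$ it comes from $2^{\alpha-1}\le s$. It then converts this into $\nu_p(M_s)\ge\alpha$ through Proposition~\ref{prop:Mj-valuations}.

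\emph{Your route.} You observe that $n$ and $n-1$ themselves lie in the index set $\{m:\varphi(m)\le 2s\}$. This uses only $\varphi(m)\le m-1$ and, for even $n$, $\varphi(n)\le n/2\le n-2$. Coprimality then gives $n(n-1)\mid L=2M_s$, which is exactly $a\mid M_s$.

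\emph{Comparison.} Your argument bypasses $\mu_p$, the separate $2$-adic bookkeeping, and Proposition~\ref{prop:Mj-valuations} entirely, so it is shorter and self-contained. The paper's version has the merit of staying in the per-prime language of Theorem~\ref{thm:primewise-gcd}, but that granularity is not needed for this corollary. You also make explicit, via Lemma~\ref{lem:odd-defect}, that the gcd of the whole row equals $d_s$; the paper leaves this step implicit.

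Two cosmetic points. First, the case $n-1=1$ cannot occur for $n\ge4$, and the inequality you attach to it would read $1\le -1$. Second, the prime/composite split for $n-1$ is unnecessary, since $\varphi(m)\le m-1$ for every $m\ge2$ already gives $\varphi(n-1)\le n-2=2s$.
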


\begin{theorem}[Telescopic structure]\label{thm:telescopic}
For every $1\le j\le s$, $q_jb_j\in\langle a,b_1,\ldots,b_{j-1}\rangle.$ Let
$$
J_n=\{j:1\le j\le s,\ q_j>1\}
   =\{j_1<\cdots<j_t\}.
$$
Then $S_n=\langle a,b_{j_1},\ldots,b_{j_t}\rangle,$ and the reduced generating sequence $(a,b_{j_1},\ldots,b_{j_t})$ is telescopic.
\end{theorem}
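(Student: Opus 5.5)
The heart of the argument is the membership $q_jb_j\in\langle a,b_1,\ldots,b_{j-1}\rangle$. Once it holds for every $j$, the rest is formal. If $q_j=1$, then $d_j=d_{j-1}$, so $b_j$ is an integer combination of $a,b_1,\ldots,b_{j-1}$. The membership with $q_j=1$ then says $b_j$ itself already lies in $\langle a,b_1,\ldots,b_{j-1}\rangle$. Deleting such $b_j$ one at a time does not change $S_n$, so $S_n=\langle a,b_{j_1},\ldots,b_{j_t}\rangle$. Deleting inactive generators also leaves the successive gcds along the retained indices unchanged, because $d_{j_i}=d_{j_{i+1}-1}$. Hence the quotients for the reduced sequence are exactly $q_{j_1},\ldots,q_{j_t}$. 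The membership, rewritten in terms of the retained generators, is precisely the telescopic condition $(d_{j_{i-1}}/d_{j_i})\,b_{j_i}\in\langle a,b_{j_1},\ldots,b_{j_{i-1}}\rangle$. So everything reduces to the membership statement.

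To prove the membership I would build a canonical mixed-radix reduction. By Theorem~\ref{thm:main}, $d_j$ divides $q_jb_j$ and $d_j\cdot\gcd(a,M_j)=a$, and $q_jb_j$ is a multiple of $d_{j-1}$. I would peel off digits from the top index down. Writing $q_jb_j=\sum_{h<j}c_hb_h+c_0a$, I choose $0\le c_h<q_h$ successively for $h=j-1,j-2,\ldots,1$ by reducing modulo $d_{h-1}$. At stage $h$ the current remainder is divisible by $d_h$, and $c_h$ is the unique residue with $c_hb_h\equiv(\text{remainder})\pmod{d_{h-1}}$; this is possible because $b_h/d_h$ is invertible modulo $q_h$. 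At the end the remainder is divisible by $d_0=a$, which gives an integer $c_0$. The whole task is then to show $c_0\ge0$.

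For positivity I would use two ingredients. The first is the divisor-support property: $c_h\neq0$ only when $h\mid j$ with $h<j$. I would prove this prime by prime. By Theorem~\ref{thm:main}, $q_h$ is a product of primes $p$ (dividing $a$) at which $M_h$ gains a new factor. Such primes satisfy $\varphi(p^e)=2h$ or $\varphi(2p^e)$-type thresholds, so $q_h$ is squarefree and $p\equiv1$-type conditions tie $h$ to $p-1$ or to $p^{e-1}(p-1)$. Using the prime-power block polynomial $\prod_{r\bmod p^e}(1+rT)\equiv(1-T^{p-1})^{\cdots}$-style congruences from Section~3, the coefficient $b_j$ modulo $p$ is supported only on degrees $2j$ divisible by the threshold degree $2h$. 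That forces the $p$-component of each digit to vanish unless $h\mid j$. The second ingredient is the growth estimate. Since $c_h<q_h\le M_h\le$ something polynomial in $h$, and only divisors of $j$ contribute, the total $\sum_{h\mid j,h<j}q_hb_h$ can be bounded. I would bound it against $q_jb_j$ using log-concavity/Newton inequalities for $e_\ell(1,\ldots,n-1)$, which give ratios like $b_j/b_{h}\ge\bigl(\tbinom{n-1}{2j}/\tbinom{n-1}{2h}\bigr)$-type lower bounds. These dominate the products $q_h$ because $h\le j/2$ and $b_j\ge$ roughly $n^{2(j-h)}/(\text{factorials})$ times $b_h$ in the relevant range. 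Near the top of the row, where $b_j$ starts decreasing, I would instead use $q_jb_j\ge b_j\ge a$ together with the symmetric unimodality from Lemma~\ref{lem:min1}.

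I expect the main obstacle to be the growth estimate when $j$ is close to $s$. There the $b_j$ are past the mode and can be smaller than $b_h$ for small divisors $h$, so the crude comparison fails. My first attempt would be to split into $2j\le K$ (increasing range, Newton inequalities) and $2j>K$. In the second range I would note that the divisor-support leaves only $h\le j/2$, where $n-2h$ is still far from $n-2j$. I would then compare $q_jb_j$ with $q_hb_h$ using the explicit asymptotic $\stirling{n}{n-\ell}\approx\binom n{\ell+1}$-type sizes for small $\ell$. This may require checking finitely many small $n$ directly.
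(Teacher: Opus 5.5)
Your outline follows the paper's architecture: the formal passage from the memberships to telescopicity, top-down mixed-radix digits $0\le c_h<q_h$, divisor support from the block congruences plus squarefreeness of $q_h$, and a growth bound for $c_0>0$. The formal part and the digit construction are fine. In the divisor-support step, two points need to be stated correctly:
\begin{itemize}
\item What the digit argument needs is $\nu_p(b_m)\ge\nu_p(d_h)+1$ whenever $p\mid q_h$ and $h\nmid m$. That is, support on degrees divisible by $2h$ modulo $p^{\nu_p(d_h)+1}$, not modulo $p$.
\item The argument must be a descending induction on $h$. The remainder at stage $h$ is $q_jb_j-\sum_{k>h}c_kb_k$, so you need $h\nmid k$ for every earlier nonzero digit, and this comes from $k\mid j$.
\end{itemize}

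The genuine gap is the positivity of $c_0$, which is where the real work lies. Two ingredients of your plan fail.

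\textbf{The bound on $q_h$.} The claim $q_h\le M_h\le(\text{polynomial in }h)$ is false. $M_h$ is divisible by every odd prime $\le 2h+1$, so it grows exponentially. The right statement is $q_h\mid C_h=M_h/M_{h-1}$, with $C_h\le 2(2h+1)(\sqrt{2h}+1)$. This holds because at a single threshold only three things can enter: the prime $2h+1$, at most one odd prime $p<\sqrt{2h}+1$ with $p^{e-1}(p-1)=2h$ and $e\ge2$, and one factor $2$.

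\textbf{Comparing $b_h$ with $b_j$ for $j$ near $s$.} Your plan has no bound that works in this range:
\begin{itemize}
\item Ratios like $\binom{n-1}{2j}/\binom{n-1}{2h}$ drop below $1$ past the middle of the row.
\item Small-$\ell$ asymptotics are irrelevant when $2j$ is close to $n-1$. Moreover $\stirling{n}{n-\ell}$ has order $n^{2\ell}/(2^\ell\ell!)$, not $\binom{n}{\ell+1}$.
\item Lemma~\ref{lem:min1} gives unimodality, not symmetry.
\item $q_jb_j\ge a$ says nothing about $\sum_h c_hb_h$.
\end{itemize}
Since the regime $j\approx s$ occurs for every $n$, it cannot be pushed into finitely many small $n$.

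\textbf{The missing idea} is an $n$-uniform ratio bound. Double counting gives $\binom{r+u}{r}e_{r+u}(x)=\sum_{|B|=r}x_B\,e_u(x_i:i\notin B)$. The complement of any $r$-set dominates $\{1,\ldots,N-r\}$ termwise, so $b_j/b_h>e_{2(j-h)}(1,\ldots,n-1-2h)/\binom{2j}{2h}\ge(2j-2h)!/\binom{2j}{2h}$ for all $h<j\le s$, regardless of where the mode lies. Combine this with divisor support ($h\le j/2$, so $2j-2h\ge j$) and the bound on $C_h$, and set $A_j=\sum_{h\mid j,\,h<j}c_hb_h$. Then:
\begin{itemize}
\item $A_j/b_j<j(j+1)(\sqrt j+1)4^j/j!<1$ for $j\ge15$.
\item For $j\le14$ there is a finite check using the exact jumps $C_h$.
\item Only $j\in\{2,3,4,6\}$ need $n$-dependent bounds plus a few exact small-$n$ verifications. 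For example, $n=5$ gives $5b_2=12a$ even though $b_2<b_1$.
\end{itemize}
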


\begin{corollary}\label{cor:apery-frobenius}
For every $n\ge4$,
$$
\Ap(S_n,a)
=
\left\{
\sum_{j=1}^{s}x_jb_j:
0\le x_j<q_j
\right\}.
$$
In particular, $\max\Ap(S_n,a)=\sum_{j=1}^{s}(q_j-1)b_j,$ and hence $F(S_n)=\sum_{j=1}^{s}(q_j-1)b_j-a.$ Equivalently,
$$
F(S_n)=
\sum_{j=1}^{\lfloor(n-1)/2\rfloor}
(q_j-1)\stirling n{n-2j}
-\binom n2.
$$
\end{corollary}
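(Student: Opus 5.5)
The plan is to derive the corollary from Theorem~\ref{thm:telescopic} together with the standard theory of telescopic (free) sequences. Write $S'_j=\langle a,b_1,\ldots,b_j\rangle$ and $d_j=\gcd(a,b_1,\ldots,b_j)$. By Theorem~\ref{thm:main} and Corollary~\ref{cor:numerical}, $d_s=1$, so $a=d_0=\prod_{j=1}^s q_j$. The indices $j$ with $q_j=1$ contribute only $x_j=0$ to the claimed set, so they may be ignored, in line with Theorem~\ref{thm:telescopic}.

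\textbf{Step 1: the rectangle has the right size and hits every residue class.} Consider the map
\[
(x_1,\ldots,x_s)\mapsto \sum_j x_jb_j \pmod a,\qquad 0\le x_j<q_j.
\]
Its domain has $\prod q_j=a$ elements, so it suffices to prove injectivity. Suppose $\sum_j x_jb_j\equiv\sum_j y_jb_j\pmod a$, and let $k$ be the largest index with $x_k\ne y_k$. Reducing modulo $d_{k-1}$ gives $(x_k-y_k)b_k\equiv 0\pmod{d_{k-1}}$, since all lower $b_j$ and $a$ are divisible by $d_{k-1}$. The identity $\gcd(d_{k-1},b_k)=d_k$ then yields $d_{k-1}/d_k=q_k$ divides $x_k-y_k$. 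Because $|x_k-y_k|<q_k$, this forces $x_k=y_k$, a contradiction. Hence the rectangle $R$ represents each class modulo $a$ exactly once.

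\textbf{Step 2: each element of $R$ is minimal in its class.} This is the main point. We show by induction on $j$ that every $m\in S'_j$ can be rewritten as $m=\lambda a+\sum_{i\le j}x_ib_i$ with $0\le x_i<q_i$ and $\lambda\ge0$. Given a representation $m=\lambda a+\sum_{i\le j}c_ib_i$, write $c_j=uq_j+x_j$ with $0\le x_j<q_j$. Theorem~\ref{thm:telescopic} gives $q_jb_j\in S'_{j-1}$, so $uq_jb_j$ is absorbed into $S'_{j-1}$; apply induction to the remaining $S'_{j-1}$ part. Thus every $m\in S_n$ equals $\lambda a+r$ with $r\in R$ and $\lambda\ge0$. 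If $w$ is the Apéry element of a class and $r\in R$ is the element of that class from Step 1, then $w=\lambda a+r$ forces $\lambda=0$ by minimality, so $w=r$. Consequently $\Ap(S_n,a)=R$.

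\textbf{Step 3: the maximum and the Frobenius number.} Every $b_j>0$, so the maximum of $R$ is attained at $x_j=q_j-1$ for all $j$. The formula $F(S_n)=\max\Ap(S_n,a)-a$ then follows from Selmer's formula, applied with the modulus $a$. This is legitimate because that formula holds for any nonzero element of the semigroup, not only the multiplicity; here $a$ is in fact the minimum by Lemma~\ref{lem:min1}. Substituting $a=\binom n2$ and $b_j=\stirling n{n-2j}$ gives the displayed expression.

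I expect Step 2 to be the only nontrivial point. Its difficulty is already contained in Theorem~\ref{thm:telescopic}, which we assume, and what remains is bookkeeping with the induction.
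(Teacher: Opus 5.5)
Your proposal is correct and follows the same route as the paper: the rectangular Ap\'ery set is obtained from the telescopic property $q_jb_j\in\langle a,b_1,\ldots,b_{j-1}\rangle$ of Theorem~\ref{thm:telescopic}, the maximum is read off, and then $F(S_n)=\max\Ap(S_n,a)-a$ is applied. The only difference is that the paper cites the standard Ap\'ery-set description for telescopic sequences, applied to the reduced sequence $(a,b_{j_1},\ldots,b_{j_t})$. Your Steps 1 and 2 instead prove that description directly for the full sequence, using injectivity modulo $a$ via $\gcd(d_{k-1},b_k)=d_k$ and the normal-form reduction, which makes the argument self-contained and handles the inactive indices with $q_j=1$ without passing to the reduced sequence.
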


\begin{corollary}[Structural invariants]\label{cor:structural-invariants}
For every $n\ge4$, the numerical semigroup $S_n$ is free, and hence
symmetric.  In particular,
$$
\mathrm{t}(S_n)=1,
\qquad
\mathrm{g}(S_n)=\frac{F(S_n)+1}{2},
\qquad
c(S_n)=F(S_n)+1.
$$
Consequently,
$$
\mathrm{g}(S_n)
=
\frac12\left(
1-\binom n2+
\sum_{j=1}^{\lfloor(n-1)/2\rfloor}
(q_j-1)\stirling n{n-2j}
\right),
$$
and
$$
c(S_n)
=
1-\binom n2+
\sum_{j=1}^{\lfloor(n-1)/2\rfloor}
(q_j-1)\stirling n{n-2j}.
$$
\end{corollary}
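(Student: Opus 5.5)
The plan is to derive Corollary~\ref{cor:structural-invariants} directly from Theorem~\ref{thm:telescopic} and Corollary~\ref{cor:apery-frobenius}, using the standard theory of free and telescopic semigroups as in \cite{Rosales2009}.

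\emph{Freeness.} By Theorem~\ref{thm:telescopic}, $S_n=\langle a,b_{j_1},\ldots,b_{j_t}\rangle$, and this ordered sequence is telescopic. Every telescopic sequence is free in the sense of Bertin--Carbonne, since for each $k$ the element $q_{j_k}b_{j_k}$ lies in the semigroup generated by the preceding generators. This relation is supplied by Theorem~\ref{thm:telescopic} for every $j$, in particular for $j=j_k$, because the deleted indices have $q_j=1$ and contribute nothing. Here $q_{j_k}$ is exactly the quotient of consecutive gcds of the reduced sequence, again because the inactive indices do not change the gcd chain. Hence $S_n$ is free.

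\emph{Symmetry and type.} Free numerical semigroups are complete intersections, and complete intersections are symmetric. To keep the argument self-contained, I would verify symmetry directly from the rectangular Ap\'ery set of Corollary~\ref{cor:apery-frobenius}. The set $\Ap(S_n,a)$ consists of the $\prod q_j=a$ elements $\sum x_jb_j$ with $0\le x_j<q_j$, and these are pairwise distinct. I would then show that the maximal elements with respect to $\le_{S_n}$ (where $u\le_{S_n}v$ iff $v-u\in S_n$) reduce to the single element $w=\sum(q_j-1)b_j$. Indeed, for any $x=\sum x_jb_j$ in the Ap\'ery set, $w-x=\sum(q_j-1-x_j)b_j$ is again in the Ap\'ery set, so $w-x\in S_n$. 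The pseudo-Frobenius numbers are exactly $m-a$ for the $\le_{S_n}$-maximal $m\in\Ap(S_n,a)$, so $\mathrm t(S_n)=1$. Type one is equivalent to symmetry, which gives $\mathrm g(S_n)=(F(S_n)+1)/2$. Alternatively, $\Ap(S_n,a)$ is closed under $x\mapsto w-x$, and this involution is the Apéry-set characterization of symmetric semigroups.

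\emph{Formulas.} The identity $c(S_n)=F(S_n)+1$ is the definition of the conductor. Substituting $F(S_n)=\sum_{j}(q_j-1)\stirling n{n-2j}-\binom n2$ from Corollary~\ref{cor:apery-frobenius} yields the displayed expressions for $\mathrm g$ and $c$.

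There is no real obstacle here; the substance lies in Theorem~\ref{thm:telescopic}. The only point needing care is that the rectangular Ap\'ery elements are genuinely distinct modulo $a$, so that the involution argument is valid. This distinctness follows from the telescopic structure through the cardinality count $\prod q_j=d_0/d_s=a$.
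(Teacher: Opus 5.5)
Your proposal is correct. The freeness step and the final substitution are the same as in the paper, but you obtain symmetry and type one by a different route.

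\textbf{The paper's route.} It computes the genus explicitly. From $\prod_j q_j=d_0/d_s=a$ it gets uniqueness of the digit representation on the rectangular Ap\'ery set. It then sums coordinatewise to get $\frac1a\sum_{w\in\Ap(S_n,a)}w=\frac12\sum_j(q_j-1)b_j$. Inserting this into Selmer's formula $\mathrm{g}=\frac1a\sum w-\frac{a-1}{2}$ gives $\mathrm{g}(S_n)=(F(S_n)+1)/2$. Only then does it conclude symmetry, and $\mathrm{t}(S_n)=1$ from symmetry.

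\textbf{Your route.} You go in the opposite direction. You use the description of the pseudo-Frobenius numbers as $m-a$ for the $\le_{S_n}$-maximal $m\in\Ap(S_n,a)$. You observe that $w-x=\sum_j(q_j-1-x_j)b_j\in S_n$ for every Ap\'ery element $x$. Hence $w=\sum_j(q_j-1)b_j$ is the unique maximal element and $\mathrm{t}(S_n)=1$. Symmetry and the genus identity then follow from the standard equivalences (type one, symmetric, and $\mathrm{g}=(F+1)/2$).

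\textbf{Comparison.} Your argument is slightly more economical. It uses only that every Ap\'ery element has \emph{some} digit representation with $0\le x_j<q_j$, not that this representation is unique. So the distinctness point you single out at the end is not actually needed. This holds even for your involution variant: if $w-x-a\in S_n$, then $w-a=(w-x-a)+x\in S_n$, which is a contradiction. Hence $w-x\in\Ap(S_n,a)$ automatically, and $x\mapsto w-x$ is trivially injective. The paper's computation costs the bijectivity of the digit parametrization. In exchange it yields the genus as a direct Ap\'ery-sum calculation, instead of routing it through the type-one/symmetry equivalence. Your opening remark that free implies complete intersection, which implies symmetric, would also suffice on its own as a citation-based proof.
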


\noindent 
{\bf Example.} 
Let $n=7$.    
Since 
$$
(\varphi(m))_{m=1}^{20}=1, 1, 2, 2, 4, 2, 6, 4, 6, 4, 10, 4, 12, 6, 8, 8, 16, 6, 18, 8,
$$
A direct computation gives
$$
M_1 = 6, \quad M_2 = 60, \quad M_3 = 1260
$$
Hence, for $1\le j\le s=3$ and $a=\binom{7}{2}=21$,  
\begin{align*}
q_1&=\frac{\gcd(21,6)}{\gcd(21,1)}=3,\\
q_2&=\frac{\gcd(21,60)}{\gcd(21,6)}=1,\\
q_3&=\frac{\gcd(21,1260)}{\gcd(21,60)}=7. 
\end{align*}  
Then, the Ap\'ery set is given as 
$$
\Ap(S_7,a)=\left\{\sum_{j=1}^3 x_j\stirling 7{7-2j}:
0\le x_j\le q_j-1\right\}. 
$$ 
Since 
$$
\max\Ap(S_7,a)=2\stirling 7{5}+6\stirling 7{1},  
$$
the Frobenius number is given by 
\begin{align*}
F(S_7)&=2\stirling 7{5}+6\stirling 7{1}-\stirling 7{6}\\
&=2\cdot 175+6\cdot 720-21=4649.  
\end{align*} 
In fact, the Frobenius numbers are calculated as 
\begin{multline*}
\{F(S_n)\}_{n=4}^{20}=49, 121, 1251, 4649, 85471, 261391, 1702147, 36919037, 1205447959, \\
5748032707, 238443327227,  2221478583, 54675258931, 334950599787939,\\
19574499155734199, 115242747197005505, 7768172642857175609. 
\end{multline*} 
Note that $F(S_3)=1$.   
\bigskip

The rest of the paper proves Theorems \ref{thm:main} and \ref{thm:telescopic}, together with their corollaries.

\section{Arithmetic of the gcd filtration}

For a prime $p$ and $\beta\ge1$, let us define $H_{p,\beta}(T):= \prod_{r=1}^{p^\beta-1}(1+rT)$ and $U_{p,\beta}(T):= \prod_{\substack{1\le r<p^\beta\\p\nmid r}}(1+rT).$
Separating the factors divisible by $p$ gives the exact recursion
\begin{equation}\label{eq:H-rec}
H_{p,\beta}(T) = U_{p,\beta}(T)H_{p,\beta-1}(pT).
\end{equation}

In the sequel, we use the following definitions of the valuation for a fixed prime number $p$. $\nu_p \colon \mathbb{N} \rightarrow \mathbb{N} \cup \{0\}$ denotes the $p$-adic valuation for $\mathbb{N}$, which is a function  defined by 
$$
\nu_p(x) = \max\{ \nu \in \mathbb{N} \cup \{0\}: p^{\nu} \mid x\}.
$$
Also, we denote the coefficient of $T^k$ in $F(T)$ as $[T^k]F(T)$.

Firstly, let us recall a basic lemma regarding to the valuation of the binomial coefficient whose upper is the power of prime $p$.

\begin{lemma}\label{lem:binom-valuation}
Let $p$ be prime, $m\ge1$, and $1\le r<p^m$.  Then
$$
\nu_p\left(\binom{p^m}{r}\right)=m-\nu_p(r).
$$
\end{lemma}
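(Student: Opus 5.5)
We prove the formula by writing the binomial coefficient as a product of ratios and counting $p$-factors in numerator and denominator separately:
$$
\binom{p^m}{r}=\frac{p^m}{r}\prod_{i=1}^{r-1}\frac{p^m-i}{i},
$$
which is the identity $\binom{p^m}{r}=\frac{p^m}{r}\binom{p^m-1}{r-1}$ written out as a product.

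For each $1\le i\le r-1<p^m$, write $i=p^{\nu}u$ with $\nu=\nu_p(i)<m$ and $p\nmid u$. Then
$$
p^m-i=p^{\nu}\bigl(p^{m-\nu}-u\bigr),
$$
and $p\nmid p^{m-\nu}-u$, because $m-\nu\ge1$ while $p\nmid u$. Hence $\nu_p(p^m-i)=\nu_p(i)$, so every ratio $(p^m-i)/i$ has $p$-adic valuation $0$.

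It follows that $\nu_p\binom{p^m-1}{r-1}=0$. Since $\nu_p$ is additive on nonzero rationals, we get
$$
\nu_p\binom{p^m}{r}=\nu_p(p^m)-\nu_p(r)=m-\nu_p(r).
$$

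The only point needing care is the strict inequality $\nu_p(i)<m$, which holds because $i<p^m$. This inequality is what guarantees that the cofactor $p^{m-\nu}-u$ is prime to $p$. The same observation shows $\nu_p(r)<m$, so the right-hand side $m-\nu_p(r)$ is positive, consistent with $p\mid\binom{p^m}{r}$.

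An alternative route is Kummer's theorem. The number of carries when adding $r$ and $p^m-r$ in base $p$ equals $m-\nu_p(r)$: the lowest $\nu_p(r)$ digits of both summands are zero, and a carry propagates through every position from $\nu_p(r)$ up to $m-1$. The direct argument above avoids having to invoke that result.
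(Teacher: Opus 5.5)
Your proof is correct and follows the same route as the paper: both use the identity $\binom{p^m}{r}=\frac{p^m}{r}\binom{p^m-1}{r-1}$ and reduce the claim to $p\nmid\binom{p^m-1}{r-1}$. The only difference is how that step is justified: the paper cites Lucas' theorem (every base-$p$ digit of $p^m-1$ equals $p-1$), while you check it directly from $\nu_p(p^m-i)=\nu_p(i)$ for $1\le i<p^m$, which makes your argument self-contained.
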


\begin{proof}
Use
$$
\binom{p^m}{r}
=
\frac{p^m}{r}\binom{p^m-1}{r-1}.
$$
By Lucas' theorem \cite{Lucas1878},
$$
\binom{p^m-1}{r-1}\not\equiv0\pmod p.
$$
The assertion follows.
\end{proof}

\begin{lemma}\label{lem:odd-unit}
Let $p$ be an odd prime and $\beta\ge1$.  Then
$$
U_{p,\beta}(T)
\equiv
(1-T^{p-1})^{p^{\beta-1}}
\pmod{p^\beta}.
$$
\end{lemma}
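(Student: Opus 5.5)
We argue by induction on $\beta$. The base case $\beta=1$ is the classical congruence over $\mathbb F_p$. The units modulo $p$ are exactly the roots of $X^{p-1}-1$, so
\[
\prod_{r=1}^{p-1}(X-r)\equiv X^{p-1}-1 \pmod p .
\]
Substituting $X=-1/T$ and multiplying by $T^{p-1}$ gives
\[
\prod_{r=1}^{p-1}(1+rT)\equiv 1-T^{p-1}\pmod p .
\]
Here $p-1$ is even, so the sign works out: $(-1)^{p-1}=1$ and $(-1)^{p-1}\cdot(-1)=-1$. This is exactly $U_{p,1}(T)\equiv 1-T^{p-1}\pmod p$.

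For the inductive step we write each unit $r$ with $1\le r<p^{\beta}$ and $p\nmid r$ uniquely as $r=r_0+p^{\beta-1}t$. Here $1\le r_0<p^{\beta-1}$, $p\nmid r_0$, and $0\le t\le p-1$. Grouping the factors by $r_0$ gives
\[
U_{p,\beta}(T)=\prod_{r_0}\ \prod_{t=0}^{p-1}\bigl(1+r_0T+p^{\beta-1}tT\bigr).
\]
We expand each inner product in powers of $p^{\beta-1}$. The zeroth-order term is $(1+r_0T)^p$. The first-order term is $p^{\beta-1}T\bigl(\sum_t t\bigr)(1+r_0T)^{p-1}$, and since $\sum_{t=0}^{p-1} t=p(p-1)/2$ is divisible by $p$ (as $p$ is odd), this term is $\equiv 0\pmod{p^\beta}$. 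Every higher-order term carries the factor $p^{2(\beta-1)}$, which is divisible by $p^{\beta}$ once $\beta\ge2$. Hence
\[
U_{p,\beta}(T)\equiv\prod_{r_0}(1+r_0T)^p=U_{p,\beta-1}(T)^p\pmod{p^\beta}.
\]

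Finally we use the standard lifting fact: if $A\equiv B\pmod{p^{k}}$ with $k\ge1$, then $A^p\equiv B^p\pmod{p^{k+1}}$. To see this, write $A=B+p^kC$ and expand $(B+p^kC)^p$. Every correction term has valuation at least $\min(k+1,\,pk)$, which is at least $k+1$. By the inductive hypothesis,
\[
U_{p,\beta-1}(T)\equiv (1-T^{p-1})^{p^{\beta-2}}\pmod{p^{\beta-1}}.
\]
Raising both sides to the $p$-th power and applying the lifting fact yields
\[
U_{p,\beta}(T)\equiv (1-T^{p-1})^{p^{\beta-1}}\pmod{p^\beta},
\]
which completes the induction.

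The main obstacle is the bookkeeping in the inductive step, specifically the first-order term. Its vanishing modulo $p^\beta$ is where oddness of $p$ is genuinely used (through $p\mid p(p-1)/2$); for $p=2$ this term survives, which explains why the statement excludes $p=2$.
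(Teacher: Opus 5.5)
Your proof is correct and follows the paper's argument essentially step for step. You do the base case over $\mathbb F_p$; you then group the units below $p^\beta$ into the $p$ lifts $r_0+tp^{\beta-1}$ of each unit modulo $p^{\beta-1}$, where the linear term vanishes because $p\mid p(p-1)/2$ and the higher terms carry $p^{2(\beta-1)}$, giving $U_{p,\beta}\equiv U_{p,\beta-1}^p\pmod{p^\beta}$; and you finish with the $p$-th power lifting of congruences. The only differences are cosmetic: you use the substitution $X=-1/T$ in the base case instead of the symmetry $u\mapsto -u$, and you prove the lifting fact that the paper merely asserts.
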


\begin{proof}
For $\beta=1$, the nonzero elements of $\mathbb F_p$ give
$$
\prod_{u=1}^{p-1}(X-u)=X^{p-1}-1.
$$
Since the set of nonzero residues is invariant under $u\mapsto-u$,
$$
U_{p,1}(T)\equiv1-T^{p-1}\pmod p.
$$

Now let $\beta\ge2$.  Every unit $u$ modulo $p^{\beta-1}$ has the
$p$ lifts $u+kp^{\beta-1}$ with $0\le k\le p-1.$
Let $A=1+uT, B=p^{\beta-1}T.$ Then
$$
\prod_{k=0}^{p-1}(A+kB)
\equiv A^p\pmod{p^\beta}.
$$
Indeed, the term linear in $B$ contains $\sum_{k=0}^{p-1}k=\frac{p(p-1)}2,$ and is therefore divisible by $p^\beta$, while every term containing
$B^r$ ($r\ge2$) is divisible by $p^{r(\beta-1)}$, hence by
$p^\beta$.  Multiplication over all units modulo $p^{\beta-1}$ yields
$$
U_{p,\beta}(T)\equiv U_{p,\beta-1}(T)^p\pmod{p^\beta}.
$$

If $U_{p,\beta-1}(T)\equiv V(T)\pmod{p^{\beta-1}},$ then $U_{p,\beta-1}(T)^p\equiv V(T)^p\pmod{p^\beta}.$ Now, we have
$$
U_{p,\beta}(T)
\equiv
(1-T^{p-1})^{p^{\beta-1}}
\pmod{p^\beta}.
$$
\end{proof}

For $1\le e\le\beta$, let $L_e:=\varphi(p^e)=p^{e-1}(p-1).$

\begin{lemma}\label{lem:odd-support}
Let $p$ be odd prime and $1\le e\le\beta$. If $L_e\nmid r$, then
$$
\nu_p\bigl([T^r]H_{p,\beta}(T)\bigr)
\ge\beta-e+1.
$$
Equivalently,
$$
H_{p,\beta}(T)
\in
\bigl(\Z/p^{\beta-e+1}\Z\bigr)[T^{L_e}].
$$
\end{lemma}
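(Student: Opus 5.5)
We will argue by induction on $\beta$, using the recursion \eqref{eq:H-rec} together with Lemma~\ref{lem:odd-unit}. The target statement, for fixed $\beta$ and all $1\le e\le\beta$, is
\[
H_{p,\beta}(T)\equiv G_{\beta,e}(T^{L_e})\pmod{p^{\beta-e+1}}
\]
for some integer polynomial $G_{\beta,e}$.

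\emph{Base case and the case $e=1$.} For $\beta=1$ we have $H_{p,1}=U_{p,1}\equiv 1-T^{p-1}\pmod p$, which is the case $e=1$. For general $\beta$ and $e=1$ we need $H_{p,\beta}(T)$ modulo $p^\beta$ to be a polynomial in $T^{p-1}$. Write $H_{p,\beta}(T)=U_{p,\beta}(T)H_{p,\beta-1}(pT)$. By Lemma~\ref{lem:odd-unit}, $U_{p,\beta}\equiv(1-T^{p-1})^{p^{\beta-1}}\pmod{p^\beta}$, which is a polynomial in $T^{p-1}$.

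For the second factor, the coefficient of $T^r$ in $H_{p,\beta-1}(pT)$ is $p^r[T^r]H_{p,\beta-1}$. When $r\ge\beta$ this is $0$ mod $p^\beta$. When $r<\beta$ and $(p-1)\nmid r$, the induction hypothesis with $e'=1$ gives $\nu_p([T^r]H_{p,\beta-1})\ge\beta-1$. Hence the coefficient has valuation at least $r+\beta-1\ge\beta$ (since $r\ge1$). So modulo $p^\beta$ the second factor is also a polynomial in $T^{p-1}$, and the product is as well.

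\emph{General $e\ge2$.} Here we work modulo $p^{\beta-e+1}$.
\begin{itemize}
\item \textbf{Unit factor.} We have $(1-T^{p-1})^{p^{\beta-1}}=\sum_i\binom{p^{\beta-1}}{i}(-1)^iT^{i(p-1)}$. By Lemma~\ref{lem:binom-valuation}, $\nu_p\binom{p^{\beta-1}}{i}=\beta-1-\nu_p(i)$. This is at least $\beta-e+1$ unless $\nu_p(i)\ge e-1$, that is, unless $p^{e-1}\mid i$, in which case $L_e\mid i(p-1)$. So modulo $p^{\beta-e+1}$ the unit factor lies in $\Z[T^{L_e}]$.
\item \textbf{Second factor.} The coefficient of $T^r$ in $H_{p,\beta-1}(pT)$ is $p^rc_r$ with $c_r=[T^r]H_{p,\beta-1}$. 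Suppose $L_e\nmid r$. If $L_{e-1}\nmid r$ and $e-1\le\beta-1$, the induction hypothesis with $e-1$ gives $\nu_p(c_r)\ge(\beta-1)-(e-1)+1=\beta-e+1$, and we are done.
\item \textbf{Remaining case.} Otherwise $L_{e-1}\mid r$ but $L_e\nmid r$, so $r\ge L_{e-1}=p^{e-2}(p-1)$. Then we need $r\ge\beta-e+1$ or some extra valuation from $c_r$. This is not automatic when $\beta$ is large compared with $e$, so it is the \textbf{main obstacle}.
\end{itemize}

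\emph{Strengthening to handle the obstacle.} To resolve the remaining case we would strengthen the induction to a graded statement:
\[
\nu_p(c_r)\ge \beta-\min\{e: L_e\nmid r\}+1 \quad\text{for all } r.
\]
This is equivalent to the stated lemma ranging over all $e$. We then use it for $H_{p,\beta-1}$ at level $e-1$ and also at levels $e'<e-1$; all $e'<e$ satisfy $L_{e'}\mid r$ here, so the bound from $c_r$ alone may be weak. In that case we combine the factor $p^r$ with the size estimate $r\ge L_{e-1}\ge p^{e-2}(p-1)\ge e-1$.

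If that still falls short, we would instead expand the full product
\[
H_{p,\beta}=\prod_{k=0}^{\beta-1}U_{p,\beta-k}(p^kT)
\]
obtained by iterating \eqref{eq:H-rec}. Each factor $U_{p,\beta-k}(p^kT)$ is, modulo $p^{\beta-k}$, equal to $(1-p^{k(p-1)}T^{p-1})^{p^{\beta-k-1}}$. Its terms $T^{i(p-1)}$ carry valuation at least
\[
k(p-1)i+\beta-k-1-\nu_p(i).
\]
We would check that this is at least $\beta-e+1$ whenever $p^{e-1}\nmid i$. For $k\ge1$ the exponent $k(p-1)i\ge k\ge\cdots$ easily compensates, and the error modulo $p^{\beta-k}$ is multiplied by $p^{k}$-scaled terms. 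Verifying this inequality uniformly, including the error terms from the congruence of Lemma~\ref{lem:odd-unit}, is where the real work lies.
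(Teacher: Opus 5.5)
Your proposal has a genuine gap: it proves the lemma only for $e=1$. For $e\ge2$ you handle the degrees with $L_{e-1}\nmid r$, but you leave the case $L_{e-1}\mid r$, $L_e\nmid r$ open as the ``main obstacle''. Your two fallback plans (the graded strengthening, and expanding $\prod_{k}U_{p,\beta-k}(p^kT)$ together with its error terms) are only sketched, and you leave the decisive inequality explicitly unverified. So as written, nothing establishes $\nu_p(p^rc_r)\ge\beta-e+1$ in that case.

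The missing idea is to apply the induction hypothesis for $H_{p,\beta-1}$ at the \emph{same} level $e$, instead of descending to $e-1$. This is exactly what you already did for $e=1$.
\begin{itemize}
\item If $\beta>e$, then $1\le e\le\beta-1$, so the hypothesis gives $\nu_p(c_r)\ge(\beta-1)-e+1=\beta-e$ for every $r$ with $L_e\nmid r$. Such $r$ is nonzero, so the coefficient $p^rc_r$ of $T^r$ in $H_{p,\beta-1}(pT)$ has valuation at least $\beta-e+r\ge\beta-e+1$.
\item If $\beta=e$, the target modulus is just $p$, and $H_{p,\beta-1}(pT)\equiv1\pmod p$.
\end{itemize}
Either way, $H_{p,\beta-1}(pT)$ is a polynomial in $T^{L_e}$ modulo $p^{\beta-e+1}$. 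Combine this with your correct analysis of the unit factor via $H_{p,\beta}(T)=U_{p,\beta}(T)H_{p,\beta-1}(pT)$, and the induction closes. You need no case split on $L_{e-1}\mid r$ and no size estimate on $r$. This is precisely the paper's argument.
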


\begin{proof}
By Lemma \ref{lem:odd-unit},
$$
U_{p,\beta}(T)\equiv \sum_{m=0}^{p^{\beta-1}} (-1)^m\binom{p^{\beta-1}}mT^{m(p-1)} \pmod{p^\beta}.
$$
If $L_e\nmid m(p-1)$, then $p^{e-1}\nmid m$, so
$\nu_p(m)\le e-2$.  Lemma \ref{lem:binom-valuation} gives
$$
\nu_p\binom{p^{\beta-1}}m = \beta-1-\nu_p(m)\ge\beta-e+1.
$$
Thus
$$
U_{p,\beta}(T)\in\left(\Z/p^{\beta-e+1}\Z\right)[T^{L_e}].
$$

Now, we use the induction on $\beta$. If $\beta=e$, then
$H_{p,\beta-1}(pT)\equiv1\pmod p$, and the result follows from
\eqref{eq:H-rec}.  If $\beta>e$, the induction hypothesis says that
every coefficient of $H_{p,\beta-1}(T)$ in a degree not divisible by
$L_e$ is divisible by $p^{\beta-e}$.  After the substitution
$T\mapsto pT$, every positive degree gains at least one further factor
of $p$.  Hence
$$
H_{p,\beta-1}(pT)
\in
\bigl(\Z/p^{\beta-e+1}\Z\bigr)[T^{L_e}].
$$
Multiplying by $U_{p,\beta}(T)$ proves the claim.
\end{proof}

\begin{lemma}\label{lem:odd-threshold}
Let $p$ be odd prime and $1\le e\le\beta$.  Then
$$
\nu_p\bigl([T^{L_e}]H_{p,\beta}(T)\bigr)=\beta-e.
$$
\end{lemma}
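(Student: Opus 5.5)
Induction on $\beta\ge e$ through the recursion \eqref{eq:H-rec}, $H_{p,\beta}(T)=U_{p,\beta}(T)H_{p,\beta-1}(pT)$. Since $L_e\ge1$, only coefficients of degree at most $L_e$ matter. Write
\[
U_{p,\beta}(T)=\sum_m u_mT^m,\qquad H_{p,\beta-1}(pT)=\sum_k p^k h_kT^k,\qquad h_k=[T^k]H_{p,\beta-1}(T).
\]
Then $[T^{L_e}]H_{p,\beta}=\sum_{m+k=L_e}u_m p^k h_k$, and the plan is to show that exactly one term has valuation $\beta-e$ and all others have strictly larger valuation.

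\textbf{The $U$-contribution.} By Lemma~\ref{lem:odd-unit}, $u_{L_e}\equiv(-1)^{p^{e-1}}\binom{p^{\beta-1}}{p^{e-1}}\pmod{p^\beta}$. Lemma~\ref{lem:binom-valuation} gives this binomial valuation $\beta-e<\beta$, so $\nu_p(u_{L_e})=\beta-e$ exactly. This is the term $k=0$, using $h_0=1$.

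\textbf{Terms with $k\ge1$, for $\beta>e$.}
\begin{itemize}
\item If $k\ge L_{e-1}$ (with $L_0:=1$), bound the term using the $U$-coefficient. Here $m=L_e-k\le L_e-L_{e-1}$. By the support statement for $U$ in the proof of Lemma~\ref{lem:odd-support}, we have $\nu_p(u_m)\ge\beta-e+1$ unless $L_e\mid m$, that is, unless $m=0$. If $m\ne0$, the factor $p^k$ makes the term strictly more divisible than $p^{\beta-e}$. If $m=0$, so $k=L_e$, the term is $p^{L_e}h_{L_e}$. Since $L_e\ge p-1\ge2$, already $p^{L_e}$ has valuation $\ge2$. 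Combined with the induction hypothesis $\nu_p(h_{L_e})=\beta-1-e$, its valuation is $L_e+\beta-1-e>\beta-e$.
\item If $1\le k<L_{e-1}$, then $m>0$ and $m\not\equiv0\pmod{L_e}$, so again $\nu_p(u_m)\ge\beta-e+1$.
\end{itemize}
More simply: for every $k\ge1$ with $m\ge1$, the bound $\nu_p(u_m)\ge\beta-e+1$ holds because $0<m<L_e$. The only remaining term is $m=0$, $k=L_e$, which was handled above. Hence every term except $k=0$ has valuation $>\beta-e$, and the claim follows.

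\textbf{Base case $\beta=e$.} Here $H_{p,e-1}(pT)\equiv1\pmod p$ in positive degrees. Every term with $k\ge1$ has valuation at least $k\ge1>0=\beta-e$. The $k=0$ term is $u_{L_e}$, which is a $p$-adic unit: it is $\pm1$ modulo $p^{\beta}$ with $\binom{p^{e-1}}{p^{e-1}}=1$. So the valuation is $0$.

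\textbf{Main subtlety.} The bound $\nu_p(u_m)\ge\beta-e+1$ for $0<m<L_e$ needs care in two places.
\begin{itemize}
\item For $m$ not a multiple of $p-1$, the coefficient vanishes modulo $p^\beta$, so it is fine.
\item For $m=t(p-1)$ with $0<t<p^{e-1}$, we have $\nu_p(t)\le e-2$, and Lemma~\ref{lem:binom-valuation} gives valuation $\ge\beta-e+1$.
\end{itemize}
All these bounds hold only modulo $p^\beta$. This suffices because we only need to compare against $\beta-e<\beta$ and $\beta-e+1\le\beta$. This congruence-versus-exact-valuation bookkeeping is the step I expect to need the most care.
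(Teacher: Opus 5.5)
Your proof is correct and follows essentially the same route as the paper. Both use induction on $\beta$ through \eqref{eq:H-rec}, the leading term $u_{L_e}$ of exact valuation $\beta-e$ from Lemmas~\ref{lem:odd-unit} and~\ref{lem:binom-valuation}, and the induction hypothesis together with $L_e\ge2$ for the term $p^{L_e}h_{L_e}$. The one difference is how you bound the intermediate convolution terms. You use the support of $U_{p,\beta}$, which gives $\nu_p(u_m)\ge\beta-e+1$ for $0<m<L_e$. The paper instead applies Lemma~\ref{lem:odd-support} to $H_{p,\beta-1}$ and uses the extra factor $p^k$. This makes your version slightly more self-contained.
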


\begin{proof}
We induct on $\beta$, with $e$ fixed.  From Lemma \ref{lem:odd-unit}, the coefficient $u_{L_e}$ of $T^{L_e}$
in $U_{p,\beta}$ satisfies
$$
u_{L_e}\equiv (-1)^{p^{e-1}}\binom{p^{\beta-1}}{p^{e-1}}\pmod{p^\beta}.
$$
By Lemma \ref{lem:binom-valuation}, $\nu_p(u_{L_e})=\beta-e.$

Let us write $H_{p,\beta-1}(T)=\sum_{r\ge0}c_rT^r.$ From \eqref{eq:H-rec}, we have
$$
[T^{L_e}]H_{p,\beta} = u_{L_e} + \sum_{r=1}^{L_e}u_{L_e-r}c_rp^r.
$$
For $0<r<L_e$, Lemma \ref{lem:odd-support} applied to $H_{p,\beta-1}$ gives 
$$
\nu_p(c_r)\ge\beta-e,
$$
and therefore
$$
\nu_p(c_rp^r)\ge\beta-e+1.
$$
For $r=L_e$, the induction hypothesis gives
$$
\nu_p(c_{L_e})=(\beta-1)-e, 
$$
whenever $\beta>e$, and hence
$$
\nu_p(c_{L_e}p^{L_e})
=
\beta-1-e+L_e
\ge\beta-e+1,
$$
since $L_e\ge2$.  In the base case $\beta=e$, the same term is
automatically divisible by $p$, which is already one order above the
valuation $0$ of $u_{L_e}$.

Thus every non-leading convolution term is divisible by
$p^{\beta-e+1}$, whereas $u_{L_e}$ has valuation exactly
$\beta-e$.  No cancellation can raise the valuation, proving the
claim.
\end{proof}

For $\beta\ge2$, let
$$
U_{2,\beta}(T)
=
\prod_{\substack{1\le r<2^\beta\\r\text{ odd}}}(1+rT).
$$

\begin{lemma}\label{lem:two-unit}
For every $\beta\ge2$,
$$
U_{2,\beta}(T)
\equiv
(1-T^2)^{2^{\beta-2}}
\pmod{2^\beta}.
$$
\end{lemma}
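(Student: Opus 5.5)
We argue by induction on $\beta$, following the pattern of Lemma~\ref{lem:odd-unit}: first a base case at $\beta=2$, then a lifting step that squares modulo the next power of $2$.

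\emph{Base case $\beta=2$.} Here the odd residues below $4$ are $1$ and $3$. Hence
\[
U_{2,2}(T)=(1+T)(1+3T)=1+4T+3T^2\equiv 1-T^2\pmod 4,
\]
which is the claim, since $2^{\beta-2}=1$.

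\emph{Lifting step, $\beta\ge3$.} Each odd residue $u$ modulo $2^{\beta-1}$ has exactly two lifts below $2^\beta$, namely $u$ and $u+2^{\beta-1}$. Put $A=1+uT$ and $B=2^{\beta-1}T$. Then
\[
A(A+B)=A^2+AB .
\]
The term $AB$ is only divisible by $2^{\beta-1}$, not by $2^\beta$. This is the point where the odd-prime argument breaks: there the sum $\sum k=p(p-1)/2$ supplied an extra factor of $p$.

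To repair this, pair each odd $u$ with its partner $u'\equiv -u \pmod{2^{\beta-1}}$. I would compute the product over all $2^{\beta-2}$ residues directly modulo $2^\beta$:
\[
U_{2,\beta}(T)=\prod_u A_u(A_u+B)\equiv \prod_u A_u^2\Bigl(1+\sum_u \frac{B}{A_u}\Bigr)\pmod{2^\beta}.
\]
This expansion is valid because any product of two or more $B$'s is divisible by $2^{2(\beta-1)}$, hence by $2^\beta$. The correction term is $B\sum_u\prod_{v\ne u}A_v$. Since $B\equiv 0\pmod{2^{\beta-1}}$, it suffices to show that $\sum_u\prod_{v\ne u}A_v$ is even. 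This holds because it is a sum of $2^{\beta-2}$ terms, an even number for $\beta\ge3$, and each term is congruent to the same polynomial $(1+T)^{2^{\beta-2}-1}$ modulo $2$. So the pairing is not even needed.

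Hence
\[
U_{2,\beta}(T)\equiv U_{2,\beta-1}(T)^2\pmod{2^\beta}.
\]
Now apply the standard fact that $V\equiv W\pmod{2^{\beta-1}}$ implies $V^2\equiv W^2\pmod{2^\beta}$; indeed $V^2-W^2=(V-W)(V+W)$, and $V+W\equiv 2W$ is even. By the induction hypothesis $U_{2,\beta-1}\equiv(1-T^2)^{2^{\beta-3}}\pmod{2^{\beta-1}}$, and squaring gives
\[
U_{2,\beta}(T)\equiv(1-T^2)^{2^{\beta-2}}\pmod{2^\beta}.
\]

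\emph{Main obstacle.} The delicate point is the linear $B$-term in the lifting step. The argument above works only because $\beta\ge3$ makes the count $2^{\beta-2}$ of odd residues even. This is also why the base case has to be $\beta=2$ and must be checked by hand.
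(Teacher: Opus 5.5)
Your proof is correct and follows the paper's argument essentially step for step. The paper uses the same base case $(1+T)(1+3T)\equiv 1-T^2\pmod 4$, and the same lifting $U_{2,\beta}=\prod_u A_u(A_u+2^{\beta-1}T)\equiv U_{2,\beta-1}^2\pmod{2^\beta}$, which rests on the linear correction being a sum of an even number $2^{\beta-2}$ of mutually congruent terms modulo $2$; it then squares the induction hypothesis, as you do.

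One small slip: the linear correction term is $B\,U_{2,\beta-1}(T)\sum_u\prod_{v\ne u}A_v=B\sum_u A_u\prod_{v\ne u}A_v^2$, not $B\sum_u\prod_{v\ne u}A_v$. Since the omitted factor $U_{2,\beta-1}(T)$ has integer coefficients, your evenness argument still yields divisibility by $2^\beta$.
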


\begin{proof}
For $\beta=2$, we have
$$
U_{2,2}(T)=(1+T)(1+3T)
=1+4T+3T^2
\equiv1-T^2\pmod4.
$$

Let $\beta\ge3$.  Every odd residue $u$ modulo $2^{\beta-1}$
has the two lifts $u$ and $u+c$ ($c=2^{\beta-1}$).  Let
$A_u=1+uT$.  Then, 
$$
U_{2,\beta}(T)
=
\prod_u A_u(A_u+cT).
$$
Note that terms involving two or more factors $cT$ vanish modulo $2$.
Thus
$$
U_{2,\beta}(T)
\equiv
U_{2,\beta-1}(T)^2
+
cT\sum_u A_u\prod_{v\ne u}A_v^2
\pmod{2^\beta}.
$$
Note that $A_u$ is congruent to $1+T$ modulo $2$, and the number of odd residues
$u$ modulo $2^{\beta-1}$ is $2^{\beta-2}$, which is even.
Hence, the displayed sum is even.  The correction term is therefore
divisible by $2^\beta$, and
$$
U_{2,\beta}(T)\equiv U_{2,\beta-1}(T)^2\pmod{2^\beta}.
$$
Then, we use the following induction. Let us assume that
$$
U_{2,\beta-1}(T) \equiv B(T):= (1-T^2)^{2^{\beta-3}}\pmod{2^{\beta-1}}.
$$
Then $U_{2,\beta-1} - B(T)$ is divisible by $2^{\beta-1}$, while $U_{2,\beta-1} + B(T)$ is divisible by $2$. Hence
$$
U_{2,\beta-1}^2 - B(T)^2 \equiv 0\pmod{2^{\beta}}.
$$
Combining this and $U_{2,\beta}(T) \equiv U_{2,\beta-1}(T)\pmod{2^{\beta}}$ yields
$$
U_{2,\beta}(T)\equiv (1-T^2)^{2^{\beta-2}}\pmod{2^{\beta}}.
$$
\end{proof}

\begin{lemma}\label{lem:two-support}
Let $\beta\ge2$ and $1\le e\le\beta-1$.  If $2^e\nmid r$, then
$$
\nu_2\bigl([T^r]H_{2,\beta}(T)\bigr)\ge\beta-e.
$$
Equivalently,
$$
H_{2,\beta}(T)\in\left(\Z/2^{\beta-e}\Z\right)[T^{2^e}].
$$
\end{lemma}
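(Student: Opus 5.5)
The argument is the $p=2$ analogue of the proof of Lemma~\ref{lem:odd-support}, with Lemma~\ref{lem:two-unit} in place of Lemma~\ref{lem:odd-unit}. The ``period'' is now $2^e$, since $\varphi(2^{e+1})=2^e$. The claim to prove is that every coefficient of $H_{2,\beta}$ in a degree not divisible by $2^e$ is divisible by $2^{\beta-e}$.

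\textbf{Step 1 (the unit part).} By Lemma~\ref{lem:two-unit},
$$
U_{2,\beta}(T)\equiv\sum_{m=0}^{2^{\beta-2}}(-1)^m\binom{2^{\beta-2}}{m}T^{2m}\pmod{2^\beta}.
$$
Only even degrees occur, and $2^e\mid 2m$ exactly when $2^{e-1}\mid m$.
\begin{itemize}
\item If $e=1$, all degrees are divisible by $2^e=2$, so the support condition is vacuous.
\item If $e\ge2$ and $2^{e-1}\nmid m$, then $\nu_2(m)\le e-2$. Since $1\le m<2^{\beta-2}$ (the end terms $m=0$ and $m=2^{\beta-2}$ have degrees divisible by $2^e$ because $e\le\beta-1$), Lemma~\ref{lem:binom-valuation} gives
$$
\nu_2\binom{2^{\beta-2}}{m}=\beta-2-\nu_2(m)\ge\beta-e.
$$
\end{itemize}
In both cases
$$
U_{2,\beta}(T)\in\bigl(\Z/2^{\beta-e}\Z\bigr)[T^{2^e}].
$$

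\textbf{Step 2 (induction on $\beta$ via \eqref{eq:H-rec}).} Write $H_{2,\beta}(T)=U_{2,\beta}(T)\,H_{2,\beta-1}(2T)$. It suffices to show
$$
H_{2,\beta-1}(2T)\in\bigl(\Z/2^{\beta-e}\Z\bigr)[T^{2^e}],
$$
because this ring is closed under multiplication.
\begin{itemize}
\item \emph{Case $e\le\beta-2$.} The induction hypothesis applies to $H_{2,\beta-1}$ with the same $e$: each coefficient $c_r$ with $2^e\nmid r$ has $\nu_2(c_r)\ge\beta-1-e$. Such an $r$ is positive, and the substitution $T\mapsto2T$ multiplies $c_r$ by $2^r$ with $r\ge1$. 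This gives valuation at least $\beta-e$.
\item \emph{Case $e=\beta-1$ (boundary case).} Here we need only divisibility by $2^{1}$ in degrees not divisible by $2^e$. Every positive-degree coefficient of $H_{2,\beta-1}(2T)$ is even, while the constant term sits in degree $0$. So the claim holds with no hypothesis on $H_{2,\beta-1}$. This also covers the base case $\beta=2$, $e=1$, which can be checked directly since $H_{2,2}=(1+T)(1+2T)(1+3T)$.
\end{itemize}

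\textbf{Main obstacle.} The delicate point is the edge bookkeeping in Step 1. One must confirm that the endpoint terms of the binomial expansion lie in allowed degrees. One must also confirm that Lemma~\ref{lem:two-unit} holds modulo $2^\beta$, which exceeds the required $2^{\beta-e}$, so reducing it is harmless. Everything else mirrors the odd-prime proof.
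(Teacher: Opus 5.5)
Your proposal is correct and follows the paper's proof essentially step for step. The unit factor is handled via Lemma~\ref{lem:two-unit} and the binomial valuations of Lemma~\ref{lem:binom-valuation}; then the recursion \eqref{eq:H-rec} is used with induction on $\beta$, and the boundary case $e=\beta-1$ is settled modulo $2$ because $H_{2,\beta-1}(2T)\equiv1\pmod 2$. Your separate treatment of $e=1$ and of the endpoint terms $m=0,\,2^{\beta-2}$ is welcome extra care that the paper leaves implicit: it avoids applying Lemma~\ref{lem:binom-valuation} with exponent $\beta-2=0$.
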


\begin{proof}
By Lemma \ref{lem:two-unit},
$$
U_{2,\beta}(T)\equiv \sum_m(-1)^m \binom{2^{\beta-2}}mT^{2m} \pmod{2^\beta}.
$$
If $2^e\nmid 2m$, then $2^{e-1}\nmid m$, so $\nu_2(m)\le e-2$.  Hence 
$$
\nu_2\left(\binom{2^{\beta-2}}m\right)= \beta-2-\nu_2(m) \ge\beta-e.
$$
Therefore
$$
U_{2,\beta}(T)\in \left(\Z/2^{\beta-e}\Z\right)[T^{2^e}].
$$

Using $H_{2,\beta}(T) = U_{2,\beta}(T)H_{2,\beta-1}(2T),$ the proof now proceeds exactly as in the odd case.  If
$e=\beta-1$, the required modulus is $2$ and $H_{2,\beta-1}(2T)\equiv1\pmod2$.  Otherwise, the induction hypothesis,
followed by $T\mapsto2T$, raises every forbidden positive-degree coefficient by one further factor of $2$.  This proves the claim.
\end{proof}

\begin{lemma}\label{lem:two-threshold}
Let $\beta\ge2$ and $1\le e\le\beta-1$.  Then
$$
\nu_2\left([T^{2^e}]H_{2,\beta}(T)\right)
=
\beta-e-1.
$$
\end{lemma}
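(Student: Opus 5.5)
We mirror the proof of Lemma~\ref{lem:odd-threshold}, inducting on $\beta$ with $e$ fixed and using the recursion $H_{2,\beta}(T)=U_{2,\beta}(T)H_{2,\beta-1}(2T)$ from \eqref{eq:H-rec}.

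\textbf{Leading term.} By Lemma~\ref{lem:two-unit}, the coefficient $u_{2^e}$ of $T^{2^e}$ in $U_{2,\beta}$ satisfies
$$
u_{2^e}\equiv (-1)^{2^{e-1}}\binom{2^{\beta-2}}{2^{e-1}}\pmod{2^\beta}.
$$
If $e\le\beta-2$, then $2^{e-1}<2^{\beta-2}$, and Lemma~\ref{lem:binom-valuation} gives $\nu_2(u_{2^e})=\beta-2-(e-1)=\beta-e-1$. If $e=\beta-1$, the binomial coefficient is $1$, so $u_{2^e}$ is odd and again $\nu_2(u_{2^e})=0=\beta-e-1$. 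Since $\beta-e-1<\beta$, reduction modulo $2^\beta$ does not affect these valuations.

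\textbf{Convolution terms.} Write $H_{2,\beta-1}(T)=\sum_r c_rT^r$. Then
$$
[T^{2^e}]H_{2,\beta}=u_{2^e}+\sum_{r=1}^{2^e}u_{2^e-r}c_r2^r,
$$
and we must show each correction term is divisible by $2^{\beta-e}$.

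In the base case $e=\beta-1$, the target is valuation $0$. Every correction term carries a factor $2^r$ with $r\ge1$, so it is even, while $u_{2^e}$ is odd. This case is done.

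Now suppose $e\le\beta-2$. Then $1\le e\le(\beta-1)-1$, so Lemma~\ref{lem:two-support} and the induction hypothesis both apply to $H_{2,\beta-1}$.
\begin{itemize}[leftmargin=*]
\item For $0<r<2^e$ we have $2^e\nmid r$, so Lemma~\ref{lem:two-support} gives $\nu_2(c_r)\ge\beta-1-e$. Hence $\nu_2(c_r2^r)\ge\beta-e$.
\item For $r=2^e$, the induction hypothesis gives $\nu_2(c_{2^e})=\beta-e-2$. Hence $\nu_2(c_{2^e}2^{2^e})=\beta-e-2+2^e\ge\beta-e$, since $2^e\ge2$.
\end{itemize}
In both cases we may bound $\nu_2(u_{2^e-r})\ge0$.

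\textbf{Conclusion.} Every correction term has valuation at least $\beta-e$, strictly greater than $\nu_2(u_{2^e})=\beta-e-1$. No cancellation can change the valuation of the leading term, so $\nu_2\bigl([T^{2^e}]H_{2,\beta}(T)\bigr)=\beta-e-1$.

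The main obstacle is the boundary bookkeeping. The case $e=\beta-1$ requires the binomial $\binom{2^{\beta-2}}{2^{e-1}}$ to equal $1$. The induction needs $e\le\beta-2$ so that the earlier lemmas apply to $H_{2,\beta-1}$. The inequality $2^e\ge2$ is tight when $e=1$, but it still suffices.
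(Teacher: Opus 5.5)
Your proof is correct and follows the paper's argument essentially step for step. You use the same leading coefficient $u_{2^e}$ from Lemma~\ref{lem:two-unit}, the same convolution split (Lemma~\ref{lem:two-support} for $0<r<2^e$, the induction hypothesis for $r=2^e$), and the same base case $\beta=e+1$ modulo $2$. Your explicit handling of $e=\beta-1$, where $\binom{2^{\beta-2}}{2^{\beta-2}}=1$ lies outside the range of Lemma~\ref{lem:binom-valuation}, and your check that the earlier lemmas apply to $H_{2,\beta-1}$ only when $e\le\beta-2$, are small but welcome tightenings of the paper's bookkeeping.
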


\begin{proof}
The coefficient $u_{2^e}$ of $T^{2^e}$ in $U_{2,\beta}$ has
$$
\nu_2(u_{2^e})
=
\nu_2\left(\binom{2^{\beta-2}}{2^{e-1}}\right)
=
\beta-e-1.
$$
Write
$$
H_{2,\beta-1}(T)=\sum_{r\ge0}c_rT^r.
$$
Then
$$
[T^{2^e}]H_{2,\beta}
=
u_{2^e}
+
\sum_{r=1}^{2^e}u_{2^e-r}c_r2^r.
$$
For $0<r<2^e$, Lemma~\ref{lem:two-support} gives
$$
\nu_2(c_r)\ge\beta-e-1,
$$
so
$$
\nu_2(c_r2^r)\ge\beta-e.
$$
For $r=2^e$, induction gives
$$
\nu_2(c_{2^e})=\beta-e-2
$$
when $\beta>e+1$, whence
$$
\nu_2(c_{2^e}2^{2^e})
=
\beta-e-2+2^e
\ge\beta-e.
$$
The boundary case $\beta=e+1$ is immediate modulo $2$.
Thus, all other convolution terms have valuation at least
$\beta-e$, one larger than the valuation of $u_{2^e}$, and the
result follows.
\end{proof}

We next transfer the preceding information to
$$
P_n(T)=\prod_{r=1}^{n-1}(1+rT).
$$

\begin{lemma}\label{lem:block-transfer}
Let $p$ be odd and let
$$
\alpha=\nu_p\left(\binom n2\right).
$$
If $\alpha>0$, then exactly one of $n$ or $n-1$ has $p$-adic valuation
$\alpha$.  Write that integer as
$$
N=Mp^\alpha,\qquad p\nmid M.
$$
Then
$$
P_n(T)\equiv H_{p,\alpha}(T)^M\pmod{p^\alpha}.
$$

For $p=2$, if
$$
\alpha=\nu_2\left(\binom n2\right)>0,
$$
whichever is even, $n$ or $n-1$ has valuation
$$
\beta=\alpha+1.
$$
Writing it as $N=M2^\beta$, $M$ odd, one has
$$
P_n(T)\equiv H_{2,\beta}(T)^M\pmod{2^\beta}.
$$
\end{lemma}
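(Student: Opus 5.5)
The plan is to reduce everything to periodicity of the factors $1+rT$ modulo a prime power. First I settle the valuation bookkeeping. Since $\gcd(n,n-1)=1$, a prime $p$ divides at most one of $n,n-1$. For odd $p$ we have $\nu_p\binom n2=\nu_p(n)+\nu_p(n-1)$, so if $\alpha>0$ exactly one of $n,n-1$ carries the whole valuation $\alpha$. For $p=2$, exactly one of $n,n-1$ is even. Because $\binom n2=n(n-1)/2$, that even one has $2$-adic valuation $\alpha+1=\beta$.

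The key observation is that, modulo $q=p^\gamma$ (with $\gamma=\alpha$ for odd $p$, and $\gamma=\beta$ for $p=2$), the factor $1+rT$ depends only on $r \bmod q$. In the case $N=n$, the index set $\{1,\ldots,n-1\}$ together with $r=n$ is $\{1,\ldots,Mq\}$. This set splits into $M$ full blocks $\{kq+1,\ldots,kq+q\}$ for $0\le k<M$. Each block contributes
\[
\prod_{t=1}^{q}(1+(kq+t)T)\equiv \prod_{t=1}^{q-1}(1+tT)\cdot(1+qT)\equiv H_{p,\gamma}(T)\pmod q .
\]
So $\prod_{r=1}^{n}(1+rT)\equiv H_{p,\gamma}(T)^M$. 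The extra factor $1+nT$ is $\equiv 1\pmod q$ because $q\mid n$, so it can be dropped, giving $P_n(T)\equiv H_{p,\gamma}(T)^M\pmod q$.

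In the case $N=n-1$, I use the factor $r=n-1$ already in the product. Its factor satisfies $1+(n-1)T\equiv1\pmod q$. The range $\{1,\ldots,n-1\}=\{1,\ldots,Mq\}$ again splits into $M$ full blocks, and each block is congruent to $H_{p,\gamma}(T)$ by the same computation. No factor needs to be added or removed here.

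I then check the degenerate cases. For odd $p$ with $\alpha\ge1$, and for $p=2$ with $\beta\ge2$, the polynomial $H_{p,\gamma}$ is the one defined earlier. No real obstacle arises: the only points needing care are the $2$-adic shift $\beta=\alpha+1$ and the handling of the boundary factor $1+NT\equiv1$.
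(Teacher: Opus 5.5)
Your proposal is correct and follows essentially the same argument as the paper. Both split $\{1,\ldots,N\}$ into $M$ complete blocks of length $p^\alpha$ (resp.\ $2^\beta$), observe that each shifted block is coefficientwise congruent to $H$ because its last factor is $\equiv 1$, and drop the extra factor $1+nT\equiv 1$ when $N=n$. You also make explicit the valuation bookkeeping that the paper leaves implicit, namely $\nu_p\binom n2=\nu_p(n)+\nu_p(n-1)$ for odd $p$ and $\beta=\alpha+1$ for $p=2$.
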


\begin{proof}
Partition the factors indexed by $1,\ldots,N$ into complete blocks of
length $p^\alpha$ in the odd case, or $2^\beta$ in the 2-adic case.
A shifted block is coefficientwise congruent to the initial block modulo
the corresponding prime power, while its final multiple of the block
length contributes a factor congruent to $1$.  If $N=n$, the extra
factor $1+nT$ is likewise congruent to $1$; if $N=n-1$, no extra
factor is needed.  This gives the stated congruences.
\end{proof}

For an odd prime $p$, let us define
$$
\mu_p(j):=
\max\{e\ge1:\varphi(p^e)\le2j\},
$$
with $\mu_p(j)=0$ if the set is empty.  For $p=2$, define
$$
\mu_2(j):=
\max\{e\ge1:2^{e-1}\le j\}.
$$

\begin{proposition}\label{prop:Mj-valuations}
For every prime $p$, $\nu_p(M_j)=\mu_p(j).$
\end{proposition}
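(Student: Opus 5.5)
The plan is to compute the $p$-adic valuation of $L:=\lcm\{m\ge1:\varphi(m)\le 2j\}$ directly, and then subtract $\nu_p(2)$, since $M_j=L/2$. The basic observation is that $\nu_p(\lcm X)=\max_{m\in X}\nu_p(m)$. Also, $\varphi$ is multiplicative and $\varphi(p^e)\mid\varphi(m)$ whenever $p^e\,\|\,m$. Hence if $m\in X$ with $\nu_p(m)=e\ge1$, then $\varphi(p^e)\le\varphi(m)\le 2j$. Conversely, $p^e$ itself lies in $X$ whenever $\varphi(p^e)\le 2j$. Therefore
\[
\nu_p(L)=\max\{e\ge0:\varphi(p^e)\le 2j\},
\]
with the convention that $e=0$ is always allowed, since $m=1\in X$. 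Note that the set is finite: $\varphi(m)\to\infty$, so $L$ is a well-defined positive integer.

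For an odd prime $p$ we have $\nu_p(2)=0$, so $\nu_p(M_j)=\nu_p(L)$. This equals $\max\{e\ge1:\varphi(p^e)\le 2j\}$ when that set is nonempty, and $0$ otherwise. That is exactly $\mu_p(j)$.

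For $p=2$ we get $\nu_2(M_j)=\nu_2(L)-1$. Since $\varphi(2^e)=2^{e-1}$ for $e\ge1$, we have
\[
\nu_2(L)=\max\{e\ge1:2^{e-1}\le 2j\}=\max\{e\ge1:2^{e-2}\le j\}.
\]
This set is nonempty because $j\ge1$, so it contains $e=1,2$. Setting $e'=e-1$ gives $\nu_2(L)-1=\max\{e'\ge0:2^{e'-1}\le j\}$. Since $j\ge1$, this maximum is attained at some $e'\ge1$, so it coincides with $\mu_2(j)=\max\{e\ge1:2^{e-1}\le j\}$. In particular, $L$ is even, so $M_j$ is an integer.

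The only delicate point is the index shift at $p=2$ caused by the factor $\tfrac12$, which is why $\mu_2$ is defined with $2^{e-1}\le j$ rather than $\varphi(2^e)\le 2j$. This needs $j\ge1$. For $j=0$ the convention $M_0=1$ is separate and is not covered by the proposition. Everything else is the routine divisibility $\varphi(p^e)\mid\varphi(m)$.
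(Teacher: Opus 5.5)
Your proof is correct and takes the same approach as the paper. Both arguments compute $\nu_p\bigl(\lcm\{m:\varphi(m)\le 2j\}\bigr)=\max\{e:\varphi(p^e)\le 2j\}$ from $\varphi(p^e)\mid\varphi(m)$ when $p^e\,\|\,m$, together with $p^e$ itself lying in the set, and then remove one power of $2$ for the factor $\tfrac12$; you simply write out the index shift $\varphi(2^e)=2^{e-1}$ and the role of $j\ge1$ more explicitly than the paper does.
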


\begin{proof}
For odd $p$, a number divisible by $p^e$ has an Euler function divisible
by $\varphi(p^e)$.  Hence, a $p^e$-factor can occur in
$\lcm\{m:\varphi(m)\le2j\}$ if and only if
$\varphi(p^e)\le2j$.  For $p=2$, the outer factor $1/2$ removes
exactly one power of $2$, giving the stated indexing.
\end{proof}

\begin{theorem}\label{thm:primewise-gcd}
Let
$$
\alpha=\nu_p(a).
$$
Then
$$
\min_{1\le i\le j}\nu_p(b_i) = \max\{0,\alpha-\mu_p(j)\}.
$$
\end{theorem}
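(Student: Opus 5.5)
We work one prime at a time and transport everything to the block polynomial by Lemma~\ref{lem:block-transfer}. If $\alpha=0$ there is nothing to prove, so assume $\alpha>0$. Recall $b_i=E_{2i}(n)=[T^{2i}]P_n(T)$.

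\emph{Odd $p$.} Write $N=Mp^\alpha$ with $p\nmid M$, so that $P_n(T)\equiv H_{p,\alpha}(T)^M \pmod{p^\alpha}$. Put $\mu=\mu_p(j)$. If $\mu\ge\alpha$ the right-hand side is $0$ and the claim is trivial, so assume $\mu<\alpha$, and set $e=\mu+1\le\alpha$.

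For the lower bound, note that $L_e=\varphi(p^e)>2j$ by maximality of $\mu$. Lemma~\ref{lem:odd-support} gives $H_{p,\alpha}\in(\Z/p^{\alpha-e+1}\Z)[T^{L_e}]$. This property is preserved under products, so it also holds for $H_{p,\alpha}^M$, and hence for $P_n$ modulo $p^{\alpha-\mu}$. Every degree $2i$ with $1\le i\le j$ satisfies $0<2i<L_e$, so it is not a multiple of $L_e$. Therefore $\nu_p(b_i)\ge\alpha-\mu$.

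For the upper bound:
\begin{itemize}
\item If $\mu\ge1$, take $i=L_\mu/2\le j$; this is an integer since $p-1$ is even. We compute $[T^{L_\mu}]H_{p,\alpha}^M$ modulo $p^{\alpha-\mu+1}$. Expand $H^M=(1+G)^M$, where modulo $p^{\alpha-\mu+1}$ the polynomial $G$ is supported in degrees that are positive multiples of $L_\mu$ (Lemma~\ref{lem:odd-support} with $e=\mu$). Only the term $M\,[T^{L_\mu}]G$ contributes in degree $L_\mu$, because products of two or more terms of $G$ have degree at least $2L_\mu$. By Lemma~\ref{lem:odd-threshold}, $[T^{L_\mu}]H_{p,\alpha}$ has valuation exactly $\alpha-\mu$, and $p\nmid M$. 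Hence $\nu_p(b_{L_\mu/2})=\alpha-\mu$.
\item If $\mu=0$, we need some $i\le j$ with $\nu_p(b_i)\le\alpha$. This holds as soon as $p^{\alpha+1}\nmid b_i$, which is weaker than what the congruence modulo $p^\alpha$ can detect. The paper's convention treats this case via the $\max\{0,\cdot\}$, so I would check it separately. Here $\varphi(p)=p-1>2j$, and I would use the exact identity $b_1=E_2(n)=\binom n2\frac{3n^2-7n+2}{12}\cdot\frac{1}{1}$ type formula. Concretely, $E_2(n)=\frac{(n-2)(n-1)n(3n-1)}{24}$, and one tests $\nu_p$ directly. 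I expect this case to need a direct argument.
\end{itemize}

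\emph{$p=2$.} Use $\beta=\alpha+1$, $N=M2^\beta$, and $P_n\equiv H_{2,\beta}^M\pmod{2^\beta}$. With $\mu=\mu_2(j)$ we have $2^{\mu}>2j$ but $2^{\mu-1}\le 2j$; the indexing is $2^{\mu-1}\le j$, i.e.\ $2^\mu\le 2j$. Set $e=\mu+1$, so $2^e>2j$, and assume $e\le\beta-1$, i.e.\ $\mu<\alpha$. Lemma~\ref{lem:two-support} then gives the lower bound $\beta-e=\alpha-\mu$. For the upper bound, take degree $2^\mu=2i$ with $i=2^{\mu-1}\le j$. Lemma~\ref{lem:two-threshold} gives valuation $\beta-\mu-1=\alpha-\mu$, and the same $(1+G)^M$ argument, using the support modulo $2^{\beta-\mu}$, isolates this coefficient.

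\textbf{Main obstacle.} The delicate point is the multinomial isolation of the threshold coefficient in $H^M$. One must ensure that cross terms, whose valuations are controlled only modulo $p^{\alpha-\mu+1}$, cannot cancel the leading term. I would handle this by reducing everything modulo $p^{\alpha-\mu+1}$ from the outset, where support in $T^{L_\mu}$-degrees makes the degree-$L_\mu$ coefficient exactly $M$ times that of $H$.
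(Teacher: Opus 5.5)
Your strategy is the paper's: block transfer, the support lemma at level $\mu_p(j)+1$ for the lower bound, and the threshold lemma at level $\mu_p(j)$ for the upper bound. For $1\le\mu<\alpha$ your argument is correct. Your isolation of the threshold coefficient in $H^M=(1+G)^M$ modulo $p^{\alpha-\mu+1}$ also fills in a step the paper passes over: it applies Lemma~\ref{lem:odd-threshold} to $H_{p,\alpha}$ and then cites $P_n\equiv H_{p,\alpha}^M$ without comment.

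\textbf{First gap: the case $\mu_p(j)\ge\alpha>0$.} You declare it trivial for odd $p$ and silently exclude it for $p=2$. It is not trivial. The right-hand side is $0$, so you must exhibit some $i\le j$ with $p\nmid b_i$; only $\min\ge 0$ is free. This case is exactly what later gives $d_s=1$, i.e.\ that $S_n$ is a numerical semigroup. The paper handles it with the threshold at level $\alpha$:
\begin{itemize}
\item for odd $p$, take $i=\varphi(p^\alpha)/2\le j$; Lemma~\ref{lem:odd-threshold} with $e=\beta=\alpha$ gives a unit coefficient of $H_{p,\alpha}$;
\item for $p=2$, take $i=2^{\alpha-1}\le j$ and use Lemma~\ref{lem:two-threshold} with $e=\beta-1=\alpha$.
\end{itemize}
Running your $(1+G)^M$ isolation modulo $p$, with the support lemma at $e=\alpha$, then shows $p\nmid b_i$. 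Separately, the $p=2$ bookkeeping sentence is garbled: $\mu=\mu_2(j)$ means $2^{\mu}\le 2j<2^{\mu+1}$, which is what your argument actually uses.

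\textbf{Second gap: odd $p$ with $\mu_p(j)=0$.} You correctly see that valuation exactly $\alpha$ is invisible modulo $p^\alpha$, but you stop at ``one tests $\nu_p$ directly.'' The missing argument is short:
\begin{itemize}
\item $p-1>2j\ge2$ forces $p\ge5$, so $24$ is a $p$-adic unit;
\item $p^\alpha$ exactly divides one of $n,n-1$, and the other is a unit;
\item $n\equiv 0$ or $1\pmod p$ makes $n-2$ and $3n-1$ units.
\end{itemize}
Hence $\nu_p(b_1)=\alpha$ from $b_1=n(n-1)(n-2)(3n-1)/24$. Together with your lower bound, this settles the case.

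\textbf{On $\alpha=0$.} Your remark that there is ``nothing to prove'' (echoing the paper's ``the assertion is immediate'') is inaccurate for the identity as literally stated. It would assert $\min_{i\le j}\nu_p(b_i)=0$, which fails for $n=7$, $p=5$, $j=1$, where $a=21$ and $b_1=175$. What holds, and what Theorem~\ref{thm:main} uses, is the version with $\nu_p(d_j)=\min\{\alpha,\min_{i\le j}\nu_p(b_i)\}$ on the left. For that version $\alpha=0$ is indeed trivial, and you should say so explicitly.
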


\begin{proof}
Let us assume that $p$ is odd.  If $1\le e:=\mu_p(j)<\alpha$, then
$$
L_{e+1}=\varphi(p^{e+1})>2j.
$$
By Lemma \ref{lem:odd-support} and Lemma \ref{lem:block-transfer}, we have $\nu_p(b_i)\ge\alpha-e$ for every $1\le i\le j$.
On the other hand,
$$
h_e:=\frac{\varphi(p^e)}2\le j,
$$
and Lemmas~\ref{lem:odd-threshold} and
\ref{lem:block-transfer} give
$$
\nu_p(b_{h_e})=\alpha-e.
$$
If $e\ge\alpha$, the threshold $h_\alpha\le j$ gives a coefficient
prime to $p$, so the minimum is $0$.

If $\mu_p(j)=0$, then $p-1>2j$.  Block support gives
$\nu_p(b_i)\ge\alpha$.  Since $p\ge5$, $b_1 = \frac{n(n-1)(n-2)(3n-1)}{24}$ has valuation exactly $\alpha$: either $n$ or $n-1$ contains
$p^\alpha$, all the remaining displayed factors are $p$-adic units.
Thus, the formula holds.

Now, let $p=2$.  If $\alpha=0$, the assertion is
immediate, so assume $\alpha\ge1$ and put $\beta=\alpha+1$.
If $e:=\mu_2(j)<\alpha,$ then by the definition of $\mu_2$,
$$
2^{e-1}\le j<2^e.
$$
Apply Lemma~\ref{lem:two-support} at level $e+1$.  Since $2i\le2j<2^{e+1},$ we have $2^{e+1}\nmid2i$ for every $1\le i\le j$.  The block
transfer therefore gives
$$
\nu_2(b_i)\ge
\beta-(e+1)
=
\alpha-e
\qquad(1\le i\le j).
$$
On the other hand, the threshold degree $2^e$ corresponds to the
index $h_e=2^{e-1}\le j,$ and Lemma \ref{lem:two-threshold}, followed by the block transfer, gives
$$
\nu_2(b_{h_e})=\beta-e-1=\alpha-e.
$$
If $e\ge\alpha$, then $2^{\alpha-1}\le j$, and the threshold at
level $\alpha$ gives
$$
\nu_2(b_{2^{\alpha-1}})=0.
$$
This proves the formula.
\end{proof}

Now, we can prove the main results in Theorem \ref{thm:main} and Corollary \ref{cor:numerical} related to the exact gcd filtration.
\begin{proof}[Proof of Theorem~\ref{thm:main}]
For every prime $p$, Theorem \ref{thm:primewise-gcd} gives
$$
\nu_p(d_j)
=
\max\{0,\nu_p(a)-\mu_p(j)\}.
$$
By Proposition \ref{prop:Mj-valuations},
$$
\nu_p(\gcd(a,M_j))
=
\min\{\nu_p(a),\mu_p(j)\}.
$$
Hence, 
$$
\nu_p(d_j)
=
\nu_p\left(\frac{a}{\gcd(a,M_j)}\right)
$$
for every prime $p$.  The first assertion follows, and the formula for
$q_j=d_{j-1}/d_j$ is immediate.
\end{proof}
\begin{proof}[Proof of Corollary~\ref{cor:numerical}]
It remains to show that $d_s=1$.  By Theorem~\ref{thm:main},
$$
d_s=\frac{a}{\gcd(a,M_s)},
$$
so it suffices to prove that
$$
a\mid M_s.
$$

Let $p^\alpha\Vert a$.  Suppose first that $p$ is odd.  Since
$$
p^\alpha\mid \frac{n(n-1)}2,
$$
we have $p^\alpha\mid n$ or $p^\alpha\mid n-1$.

If $n$ is odd, then $2s=n-1$, and
$$
\varphi(p^\alpha)<p^\alpha\le n.
$$
Since both sides are integers,
$$
\varphi(p^\alpha)\le n-1=2s.
$$
If $n$ is even and $p^\alpha\mid n-1$, then similarly
$$
\varphi(p^\alpha)<p^\alpha\le n-1,
$$
and hence
$$
\varphi(p^\alpha)\le n-2=2s.
$$
Finally, if $n$ is even and $p^\alpha\mid n$, then, since
$p^\alpha$ is odd,
$$
p^\alpha\le\frac n2,
$$
and therefore
$$
\varphi(p^\alpha)<p^\alpha
\le\frac n2\le n-2=2s.
$$
Thus in every case $\mu_p(s)\ge\alpha$.  By
Proposition~\ref{prop:Mj-valuations},
$$
\nu_p(M_s)\ge\alpha.
$$

Now let $p=2$ and $\alpha=\nu_2(a)>0$.  The even member of
$\{n,n-1\}$ is divisible by $2^{\alpha+1}$.  If $n$ is odd, then
$$
2^{\alpha-1}\le\frac{n-1}{4}\le s,
$$
whereas if $n$ is even, then
$$
2^{\alpha-1}\le\frac n4\le\frac{n-2}{2}=s.
$$
Hence Proposition~\ref{prop:Mj-valuations} again gives
$$
\nu_2(M_s)\ge\alpha.
$$

We have therefore proved that $a\mid M_s$.  Hence
$$
d_s=\frac{a}{\gcd(a,M_s)}=1.
$$
By Corollary~\ref{cor:numerical}, $S_n$ is a numerical semigroup.
\end{proof}

\begin{corollary}\label{cor:q-squarefree}
Every $q_j$ is squarefree.
\end{corollary}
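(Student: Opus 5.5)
We argue prime by prime from the formula in Theorem~\ref{thm:main}, $q_j=\gcd(a,M_j)/\gcd(a,M_{j-1})$. For each prime $p$, Proposition~\ref{prop:Mj-valuations} gives
$$
\nu_p(q_j)=\min\{\nu_p(a),\mu_p(j)\}-\min\{\nu_p(a),\mu_p(j-1)\},
$$
where we set $\mu_p(0)=0$, consistent with $M_0=1$. The function $t\mapsto\min\{\alpha,t\}$ is nondecreasing and $1$-Lipschitz. Hence it suffices to show that $\mu_p(j)-\mu_p(j-1)\le1$ for every prime $p$ and every $j\ge1$. Then $\nu_p(q_j)\in\{0,1\}$ for all $p$, which is exactly squarefreeness.

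For odd $p$, the quantity $\mu_p(j)$ is the number of $e\ge1$ with $\varphi(p^e)\le 2j$, because $\varphi(p^e)$ is strictly increasing in $e$. The difference $\mu_p(j)-\mu_p(j-1)$ therefore counts the $e$ with $2j-2<\varphi(p^e)\le 2j$. Since $\varphi(p^e)=p^{e-1}(p-1)$ is even, such an $e$ must satisfy $\varphi(p^e)=2j$. Strict monotonicity in $e$ then allows at most one such $e$, so the difference is at most $1$.

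For $p=2$, the definition gives $\mu_2(j)=\lfloor\log_2 j\rfloor+1$ for $j\ge1$, with $\mu_2(0)=0$. The jump at $j=1$ is $1$. For $j\ge2$, the jump is $1$ exactly when $j$ is a power of $2$ and $0$ otherwise. So the difference is again at most $1$.

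The only delicate point is the boundary convention at $j=1$. We need $\mu_p(0)=0$ to match $M_0=1$, and the factor $\tfrac12$ in $M_j$ must be accounted for at $p=2$. Proposition~\ref{prop:Mj-valuations} already handles this factor. With these two checks in place, combining the displayed valuation formula with the Lipschitz property gives $\nu_p(q_j)\le 1$ for all $p$, and the corollary follows.
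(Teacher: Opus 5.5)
Your proof is correct and follows the paper's argument: both use Proposition~\ref{prop:Mj-valuations} to reduce to $\mu_p(j)-\mu_p(j-1)\le 1$ for each prime $p$, then pass to $q_j=\gcd(a,M_j)/\gcd(a,M_{j-1})$. Your parity observation for odd $p$ (any $\varphi(p^e)$ in $(2j-2,2j]$ must equal $2j$) and the explicit $1$-Lipschitz step for $t\mapsto\min\{\alpha,t\}$ spell out what the paper states more briefly through the growth of successive thresholds.
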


\begin{proof}
By Proposition~\ref{prop:Mj-valuations}, $\nu_p(M_j)=\mu_p(j)$.
For odd $p$, the successive threshold values
\[
\varphi(p^e)=p^{e-1}(p-1)
\]
are multiplied by $p\ge3$ as $e$ increases, while for $p=2$ the
successive thresholds are the powers $2^{e-1}$.  Hence, when the
integer index increases from $j-1$ to $j$, at most one new threshold
for a fixed prime $p$ can be crossed.  Thus
\[
\nu_p(M_j)-\nu_p(M_{j-1})\in\{0,1\}.
\]
Since
\[
q_j=\frac{\gcd(a,M_j)}{\gcd(a,M_{j-1})},
\]
it follows that $\nu_p(q_j)\in\{0,1\}$ for every prime $p$.
\end{proof}

\begin{lemma}\label{lem:support}
Let $p\mid q_h$.  Then, for every admissible $m$, $h\nmid m$ implies that $\nu_p(b_m)\ge \nu_p(d_h)+1.$
\end{lemma}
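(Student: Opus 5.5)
The plan is to turn the hypothesis $p\mid q_h$ into the statement that $h$ is exactly a threshold index for $p$, and then read off the required congruence from the block-support lemmas (Lemma~\ref{lem:odd-support} and Lemma~\ref{lem:two-support}) via Lemma~\ref{lem:block-transfer}.

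\emph{Step 1: identify $h$.} Put $\alpha=\nu_p(a)$. By Theorem~\ref{thm:main} and Proposition~\ref{prop:Mj-valuations},
\[
\nu_p(q_h)=\min\{\alpha,\mu_p(h)\}-\min\{\alpha,\mu_p(h-1)\},
\]
where $\mu_p(0)=0$, consistent with $M_0=1$. So $p\mid q_h$ forces $\mu_p(h)=\mu_p(h-1)+1=:e$ with $1\le e\le\alpha$. By Corollary~\ref{cor:q-squarefree}, exactly one threshold is crossed.
\begin{itemize}
\item For odd $p$, $\varphi(p^e)\le 2h$ while $\varphi(p^e)>2(h-1)$. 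Since $\varphi(p^e)$ is even, this gives $\varphi(p^e)=2h$, that is, $L_e=2h$.
\item For $p=2$, it gives $2^{e-1}=h$.
\end{itemize}
In both cases Theorem~\ref{thm:primewise-gcd} yields $\nu_p(d_h)=\alpha-e$. So the goal becomes $\nu_p(b_m)\ge\alpha-e+1$ whenever $h\nmid m$.

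\emph{Step 2: odd $p$.} Since $b_m=[T^{2m}]P_n(T)$ and $L_e=2h$, the condition $h\nmid m$ is equivalent to $L_e\nmid 2m$. Lemma~\ref{lem:odd-support} with $\beta=\alpha$ gives $H_{p,\alpha}(T)\in(\Z/p^{\alpha-e+1}\Z)[T^{L_e}]$. This set is a subring of the polynomial ring, so the power $H_{p,\alpha}(T)^M$ lies in it as well. Because $\alpha-e+1\le\alpha$, the congruence $P_n\equiv H_{p,\alpha}^M\pmod{p^\alpha}$ from Lemma~\ref{lem:block-transfer} descends modulo $p^{\alpha-e+1}$. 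Therefore every coefficient of $P_n$ in a degree not divisible by $L_e$, in particular $b_m$, has valuation at least $\alpha-e+1$.

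\emph{Step 3: $p=2$.} Let $\beta=\alpha+1$. Here $h\nmid m$ is equivalent to $2^e\nmid 2m$. Lemma~\ref{lem:two-support} at level $e$ applies because $1\le e\le\alpha=\beta-1$. It gives $H_{2,\beta}\in(\Z/2^{\beta-e}\Z)[T^{2^e}]$, and $\beta-e=\alpha-e+1$. The same subring argument, combined with $P_n\equiv H_{2,\beta}^M\pmod{2^\beta}$, gives $\nu_2(b_m)\ge\alpha-e+1$. The case $h=1$ is vacuous, since $1$ divides every $m$.

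\emph{Main obstacle.} I expect the main point needing care to be Step~1: extracting from $p\mid q_h$ that $h$ equals the threshold index exactly, namely $2h=\varphi(p^e)$, or $h=2^{e-1}$ for $p=2$, with $e\le\alpha$. One should also check that the chosen level $e$ satisfies the hypotheses of the support lemmas, in particular $e\le\beta-1$ in the $2$-adic case. After that, the argument is just the observation that the support conditions are preserved under taking powers and under reducing the modulus.
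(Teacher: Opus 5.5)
Your proposal is correct and follows the paper's proof. You first identify $h$ as the threshold index, $2h=\varphi(p^e)$ for odd $p$ and $h=2^{e-1}$ for $p=2$, with $\nu_p(d_h)=\alpha-e$; you then read off $\nu_p(b_m)\ge\alpha-e+1$ from the block-support lemmas, transferred to $P_n$ via Lemma~\ref{lem:block-transfer}, while spelling out the steps the paper leaves implicit (the parity argument forcing $2h=\varphi(p^e)$, closure of the support condition under powers and under reducing the modulus, and the check $e\le\beta-1$ in the $2$-adic case).
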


\begin{proof}
Suppose first that $p$ is odd.  Since $p\mid q_h$, the gcd
filtration shows that for some $e\ge1$,
$$
2h=\varphi(p^e),
\qquad
\nu_p(d_h)=\alpha-e,
\quad
\alpha=\nu_p(a).
$$
If $h\nmid m$, then
$$
\varphi(p^e)\nmid2m.
$$
The global block-support congruence obtained from
Lemmas~\ref{lem:odd-support} and \ref{lem:block-transfer} gives
$$
\nu_p(b_m)\ge\alpha-e+1=\nu_p(d_h)+1.
$$

For $p=2$, $2\mid q_h$ implies
$$
h=2^{e-1},
\qquad
\nu_2(d_h)=\alpha-e.
$$
If $h\nmid m$, then $2^e\nmid2m$.  Lemmas~\ref{lem:two-support} and
\ref{lem:block-transfer} give
$$
\nu_2(b_m)\ge\alpha-e+1=\nu_2(d_h)+1.
$$
\end{proof}

\begin{proposition}\label{prop:canonical}
For every $1\leq j\leq s$, there exist unique digits
$$
0\le r_{j,h}<q_h
$$
where $1\leq h < j$ and an integer $\lambda_j$ such that
$$
q_jb_j = \lambda_ja+\sum_{h=1}^{j-1}r_{j,h}b_h.
$$
Moreover, $r_{j,h}\ne0$ implies that $h\mid j$.
\end{proposition}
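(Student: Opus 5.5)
The plan is to build the digits by a descending mixed-radix reduction along the gcd filtration $d_0=a,\ d_1,\dots,d_{j-1}$. Then I prove the divisor-support property by descending induction on $h$, using Lemma~\ref{lem:support} together with Corollary~\ref{cor:q-squarefree}.

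\emph{Existence and uniqueness.} Since $d_j\mid b_j$, we have $d_{j-1}=q_jd_j\mid q_jb_j$. Put $X_{j-1}:=q_jb_j$. Suppose $X_h$ has been defined with $d_h\mid X_h$, for some $h$ with $1\le h\le j-1$. Because $d_h=\gcd(d_{h-1},b_h)$ and $d_{h-1}=q_hd_h$, the integer $b_h/d_h$ is invertible modulo $q_h$. Hence there is a unique $r_{j,h}\in[0,q_h)$ with
\[
X_h-r_{j,h}b_h\equiv0\pmod{d_{h-1}},
\]
namely $r_{j,h}\equiv (X_h/d_h)(b_h/d_h)^{-1}\pmod{q_h}$. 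Set $X_{h-1}:=X_h-r_{j,h}b_h$. After the step $h=1$ we have $a=d_0\mid X_0$, and we put $\lambda_j:=X_0/a$. This gives the required representation.

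For uniqueness, take any representation with digits in the stated ranges. Reducing it modulo $d_{j-2}$ kills $\lambda_ja$ and all terms with $h\le j-2$, since $d_{j-2}\mid d_{h-1}\mid b_h$ for $h\le j-2$. What remains forces $r_{j,j-1}$ to be the residue constructed above. Peeling off the digits one at a time in decreasing $h$ shows that the digits agree with the constructed ones, and then $\lambda_j$ is determined as well.

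\emph{Divisor support.} I prove by descending induction on $h$ (from $j-1$ down to $1$) that $r_{j,h}\neq0$ implies $h\mid j$. Fix $h$ with $h\nmid j$, and assume the claim for all $h'$ with $h<h'<j$.

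If $q_h=1$, then $r_{j,h}=0$ trivially. Otherwise let $p\mid q_h$. By Corollary~\ref{cor:q-squarefree},
\[
\nu_p(d_{h-1})=\nu_p(d_h)+1 .
\]
Also $\nu_p(b_h)=\nu_p(d_h)$, because the $p$-adic valuation drops exactly at index $h$ and so must be attained by $b_h$. Now
\[
X_h=q_jb_j-\sum_{h<h'<j}r_{j,h'}b_{h'} .
\]
Lemma~\ref{lem:support} gives $\nu_p(b_j)\ge\nu_p(d_h)+1$, since $h\nmid j$. For each $h'$ with $r_{j,h'}\neq0$, the induction hypothesis gives $h'\mid j$. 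Therefore $h\nmid h'$, since otherwise $h\mid j$. Applying Lemma~\ref{lem:support} again gives $\nu_p(b_{h'})\ge\nu_p(d_h)+1$. Hence
\[
\nu_p(X_h)\ge\nu_p(d_{h-1}).
\]
The defining congruence then yields $r_{j,h}b_h\equiv0\pmod{p^{\nu_p(d_{h-1})}}$. Since $\nu_p(b_h)=\nu_p(d_{h-1})-1$, this forces $p\mid r_{j,h}$.

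Doing this for every prime $p\mid q_h$ and using that $q_h$ is squarefree, we get $q_h\mid r_{j,h}$. As $0\le r_{j,h}<q_h$, it follows that $r_{j,h}=0$.

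The main obstacle is the bookkeeping in this induction. One has to check that the reduction genuinely runs from high indices to low, so that the digits at indices $h'>h$ are already controlled when $r_{j,h}$ is computed. One also has to confirm that Lemma~\ref{lem:support} applies to every index $h'$ in the range, including $h'=j$ itself, with "admissible $m$" meaning $1\le m\le s$.
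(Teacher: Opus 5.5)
Your proposal is correct and follows the paper's proof essentially step for step. It uses the same descending mixed-radix reduction along $d_{j-1},\dots,d_0$, and the same descending induction combining Lemma~\ref{lem:support} with the squarefreeness of $q_h$ to force $p\mid r_{j,h}$ for every $p\mid q_h$. Your explicit uniqueness argument is a useful addition, but its divisibility chain should read $d_{j-2}\mid d_h\mid b_h$: the step $d_{h-1}\mid b_h$ you wrote fails in general (e.g.\ $d_0=21\nmid 175=b_1$ for $n=7$).
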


\begin{proof}
Since $d_j\mid b_j$ and $d_{j-1}=q_jd_j, d_{j-1}\mid q_jb_j.$ Let $N_{j-1}=q_jb_j$.  For $h=j-1,\ldots,1$, suppose that $d_h\mid N_h$.
Because $d_h=\gcd(d_{h-1},b_h)$ and $d_{h-1}=q_hd_h,$ we have
$$
\gcd\left(q_h,\frac{b_h}{d_h}\right)=1.
$$
Thus, there is a unique $0\le r_{j,h}<q_h$ such that
$$
N_{h-1}:=N_h-r_{j,h}b_h
$$
is divisible by $d_{h-1}$.  At the end, $a=d_0\mid N_0$, so
$N_0=\lambda_ja$.

It remains to prove divisor support.  We use descending induction on
$h$.  Assume $h\nmid j$.  If $k>h$ and $r_{j,k}\ne0$, the
induction hypothesis gives $k\mid j$; hence $h\nmid k$, for otherwise
$h\mid k\mid j$.

Let $p\mid q_h$.  By Lemma~\ref{lem:support},
$$
\nu_p(b_j),\,\nu_p(b_k)\ge \nu_p(d_h)+1
$$
for every surviving $k>h$.  Hence $p\mid\frac{N_h}{d_h}$.  Moreover, by construction,
\[
N_{h-1}=N_h-r_{j,h}b_h
\]
is divisible by $d_{h-1}=q_hd_h$.  Dividing by $d_h$ therefore gives
\[
\frac{N_h}{d_h}
\equiv
r_{j,h}\frac{b_h}{d_h}
\pmod{q_h}.
\]
The factor $b_h/d_h$ is a $p$-adic unit, so $p\mid r_{j,h}$.
This holds for every $p\mid q_h$.  Since $q_h$ is squarefree,
$q_h\mid r_{j,h}$, and the digit range forces $r_{j,h}=0$.
\end{proof}

\begin{lemma}\label{lem:extension}
Let $0<x_1\le\cdots\le x_N$ and let $r,s\ge1$ with $r+s\le N$.  Then
$$
\frac{e_{r+s}(x_1,\ldots,x_N)}
     {e_r(x_1,\ldots,x_N)}
\ge
\frac{e_s(x_1,\ldots,x_{N-r})}
     {\binom{r+s}{r}}.
$$
If the $x_i$ are strictly increasing, the inequality is strict.
\end{lemma}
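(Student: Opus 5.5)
The plan is to clear denominators and prove the equivalent product inequality
$$
e_r(x_1,\ldots,x_N)\,e_s(x_1,\ldots,x_{N-r})\le\binom{r+s}{r}\,e_{r+s}(x_1,\ldots,x_N)
$$
by comparing monomials. Both sides are positive because all $x_i>0$ and $r+s\le N$, so this is equivalent to the statement. Write $x_R=\prod_{i\in R}x_i$ for $R\subseteq\{1,\ldots,N\}$. The left side expands as $\sum_{R}\sum_{S}x_Rx_S$. Here $R$ runs over $r$-subsets of $[N]$ and $S$ over $s$-subsets of $[N-r]$. The right side equals $\sum x_Rx_{S'}$ over all ordered pairs $(R,S')$ of \emph{disjoint} subsets of $[N]$ with $|R|=r$ and $|S'|=s$. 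This holds because each $(r+s)$-set $W$ splits as $R\sqcup S'$ in exactly $\binom{r+s}{r}$ ways, and then $x_Rx_{S'}=x_W$.

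The key step is, for each fixed $R$, to build an injection $\varphi_R$ from $s$-subsets of $[N-r]$ to $s$-subsets of $[N]\setminus R$ with $x_{\varphi_R(S)}\ge x_S$. I will list the complement as $[N]\setminus R=\{c_1<c_2<\cdots<c_{N-r}\}$. Since $c_1,\ldots,c_i$ are $i$ distinct positive integers, $c_i\ge i$. For $S=\{i_1<\cdots<i_s\}\subseteq[N-r]$, put $\varphi_R(S)=\{c_{i_1},\ldots,c_{i_s}\}$. This map is injective because $i\mapsto c_i$ is. Since the $x_i$ are nondecreasing and positive, $x_{c_{i_k}}\ge x_{i_k}$ for each $k$, and multiplying gives $x_{\varphi_R(S)}\ge x_S$.

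Then I sum over $R$ and $S$:
$$
\sum_{R,S}x_Rx_S\le\sum_{R,S}x_Rx_{\varphi_R(S)}\le\sum_{(R,S')\ \text{disjoint}}x_Rx_{S'}=\binom{r+s}{r}e_{r+s}(x_1,\ldots,x_N).
$$
The middle inequality holds because, by injectivity, each disjoint pair $(R,S')$ is hit at most once, and all terms are positive. Dividing by $e_r(x_1,\ldots,x_N)\binom{r+s}{r}$ gives the lemma.

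For strictness, it suffices to exhibit one pair with $x_{\varphi_R(S)}>x_S$. I will take $R=\{1,\ldots,r\}$ and $S=\{1,\ldots,s\}$. Then $c_i=r+i>i$, so when the $x_i$ are strictly increasing each factor strictly increases. The only point needing care is the injectivity and monotonicity of $\varphi_R$, which rests on the simple observation $c_i\ge i$; no step looks like a genuine obstacle.
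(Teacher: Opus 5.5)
Your proof is correct and is essentially the paper's argument: the paper uses the same double-counting identity $\sum_{|B|=r}x_B\,e_s(x_i:i\notin B)=\binom{r+s}{r}e_{r+s}(x_1,\ldots,x_N)$ and the same observation that the $t$-th smallest element of the complement of an $r$-set is at least $t$. The paper phrases this as a weighted average of $e_s$-values, while you argue term by term with an injection (in fact a bijection, so your middle inequality is an equality). Your strictness argument via $R=\{1,\ldots,r\}$, $S=\{1,\ldots,s\}$ makes the paper's somewhat vague remark about complements precise.
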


\begin{proof}
For $B\subseteq[N]$, write $x_B=\prod_{i\in B}x_i$.  Double counting
the monomials of degree $r+s$ gives
$$
\sum_{\substack{B\subseteq[N]\\|B|=r}}
x_B\,e_s(x_i:i\notin B)
=
\binom{r+s}{r}e_{r+s}(x_1,\ldots,x_N).
$$
Dividing by $e_r(x_1,\ldots,x_N)=\sum_{|B|=r}x_B$ shows that the left-hand side, after normalization, is a weighted average
of the values $e_s$. Note that $x_i$ is chosen under the condition $i\notin B$.

If $[N]\setminus B=\{i_1<\cdots<i_{N-r}\},$ then $i_t\ge t$, and hence $x_{i_t}\ge x_t$.  Since $e_s$ is
increasing in every positive variable,
$$
e_s(x_i:i\notin B)\ge e_s(x_1,\ldots,x_{N-r}).
$$
The asserted inequality follows.  Strictness follows when the $x_i$
are strictly increasing because every $r$-subset $B$ does not leave the
same minimal complement.
\end{proof}

\begin{corollary}\label{cor:stirling-growth}
For $1\le h<j\le s$,
$$
\frac{b_j}{b_h} > \frac{e_{2(j-h)}(1,\ldots,n-1-2h)}{\binom{2j}{2h}}\ge \frac{(2j-2h)!}{\binom{2j}{2h}}.
$$
Consequently,
$$
\frac{b_h}{b_j} < \frac{\binom{2j}{2h}}{(2j-2h)!}.
$$
\end{corollary}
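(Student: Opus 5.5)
We apply Lemma~\ref{lem:extension} to the variables $x_i=i$, $1\le i\le N:=n-1$. These are strictly increasing, and $b_\ell=E_{2\ell}(n)=e_{2\ell}(1,\ldots,n-1)$. We take $r=2h$ and $s'=2(j-h)$; we write the second parameter as $s'$ to avoid a clash with $s=\lfloor(n-1)/2\rfloor$. The hypothesis $r+s'\le N$ becomes $2j\le n-1$, which holds because $j\le s$. Both parameters are at least $1$, since $1\le h<j$.

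The strict form of Lemma~\ref{lem:extension} then gives directly
\[
\frac{b_j}{b_h}=\frac{e_{2j}(1,\ldots,n-1)}{e_{2h}(1,\ldots,n-1)}>\frac{e_{2(j-h)}(1,\ldots,n-1-2h)}{\binom{2j}{2h}},
\]
which is the first inequality. For the second inequality we bound the numerator from below. The polynomial $e_{2(j-h)}(1,\ldots,N')$ with $N'=n-1-2h$ has nonnegative terms, and $N'\ge 2j-2h$. Since every summand is a positive product, $e_{2(j-h)}(1,\ldots,N')\ge e_{2(j-h)}(1,\ldots,2j-2h)$, because the variables $1,\ldots,2j-2h$ form a subset of $1,\ldots,N'$. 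The elementary symmetric polynomial of full degree in $1,\ldots,m$ is $m!$, so $e_{2(j-h)}(1,\ldots,2j-2h)=(2j-2h)!$. This gives the bound $\ge (2j-2h)!/\binom{2j}{2h}$. Since all quantities are positive, taking reciprocals yields the final estimate on $b_h/b_j$.

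The only point requiring care is strictness, which the corollary uses in the first inequality. It rests on the strict part of Lemma~\ref{lem:extension}. Concretely, the weighted average of $e_{s'}(x_i:i\notin B)$ includes some $B$ whose complement is not $\{1,\ldots,N-r\}$, for instance $B=\{1,\ldots,r\}$. For such a $B$ the value of $e_{s'}$ is strictly larger, since all variables are positive and strictly increasing and $s'\le N-r$. That $B$ carries positive weight $x_B>0$, so the average is strictly larger than the minimum. Everything else is a direct substitution.
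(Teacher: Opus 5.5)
Your proof is correct and follows the paper's argument: apply Lemma~\ref{lem:extension} with $N=n-1$, $r=2h$, second parameter $2(j-h)$ and $x_i=i$, then keep only the monomial $1\cdot2\cdots(2j-2h)=(2j-2h)!$ (which needs $2j\le n-1$) and take reciprocals. Your explicit justification of strictness is cleaner than the paper's one-line remark in the lemma: the subset $B=\{1,\ldots,r\}$ carries positive weight, and its complement gives a strictly larger value of $e_{s'}$.
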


\begin{proof}
Apply Lemma~\ref{lem:extension} with
$$
N=n-1,\qquad r=2h,\qquad s=2(j-h),\qquad x_i=i.
$$
The second inequality follows by retaining the single positive monomial
$1\cdot2\cdots(2j-2h)=(2j-2h)!$.
\end{proof}

Let $C_h:=\frac{M_h}{M_{h-1}}.$ Then $q_h\mid C_h$, hence $q_h\le C_h$.

\begin{lemma}\label{lem:C-bound}
For every $h\ge1$,
$$
C_h\le2(2h+1)(\sqrt{2h}+1).
$$
\end{lemma}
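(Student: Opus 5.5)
We plan to argue prime by prime, using Proposition~\ref{prop:Mj-valuations}: $\nu_p(M_h)=\mu_p(h)$, so
\[
C_h=\prod_p p^{\mu_p(h)-\mu_p(h-1)} .
\]
By the argument in the proof of Corollary~\ref{cor:q-squarefree}, each exponent $\mu_p(h)-\mu_p(h-1)$ is $0$ or $1$. Hence $C_h$ is squarefree: it is the product of those primes $p$ for which a new threshold is crossed exactly at index $h$. For odd $p$ this means $\varphi(p^e)\in\{2h-1,2h\}$ for some $e\ge1$. Since $\varphi(p^e)$ is even, the condition is $\varphi(p^e)=2h$. For $p=2$ the condition is that $h$ is a power of $2$.

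We sort the contributing odd primes by the exponent $e$. If $e=1$, then $p-1=2h$, so $p=2h+1$; this gives at most one prime, contributing at most $2h+1$. If $e\ge2$, then $p^{e-1}(p-1)=2h$, so $p\mid 2h$ and $p(p-1)\le 2h$, which gives $p<\sqrt{2h}+1$. We also need that at most one such prime occurs. Suppose $p<p'$ both satisfy $p^{e-1}(p-1)=p'^{e'-1}(p'-1)=2h$ with $e,e'\ge2$. Then $p'\mid 2h=p^{e-1}(p-1)$. Since $p'\ne p$, this forces $p'\mid p-1$, which is impossible because $p'>p$. So the $e\ge2$ primes contribute at most $\sqrt{2h}+1$. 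The prime $2$ contributes a factor of at most $2$.

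Multiplying the three contributions gives
\[
C_h\le 2\,(2h+1)(\sqrt{2h}+1),
\]
which is the claimed bound. No delicate estimate seems to be needed.

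The main obstacle is making the ``crossing exactly at index $h$'' bookkeeping precise. For $p=2$ we must check that $\mu_2(h)-\mu_2(h-1)=1$ exactly when $h=2^{e-1}$. For odd $p$ we must rule out a threshold $\varphi(p^e)=2h-1$, which follows from parity. We also need to check that the normalization $M_0=1$ is consistent with $\mu_p(0)=0$. For $h=1$ this gives $C_1=M_1=6\le 2\cdot3\cdot(\sqrt2+1)$, so the base case is consistent with the bound.
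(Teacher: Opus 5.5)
Your proposal is correct and follows essentially the same route as the paper. You bound $C_h$ by the product of the three possible contributions: the prime $2h+1$ (the case $e=1$), at most one odd prime $p<\sqrt{2h}+1$ with $\varphi(p^e)=2h$ and $e\ge2$ (uniqueness via $p'\mid p-1$), and a factor of at most $2$ from the $2$-adic part. Your explicit bookkeeping through $\nu_p(M_h)=\mu_p(h)$ and the parity of $\varphi(p^e)$ just makes precise what the paper states tersely as ``at a fixed threshold $h$.''
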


\begin{proof}
At a fixed threshold $h$, a new odd prime with exponent $1$ can only
be $2h+1$.  A higher odd prime-power contribution satisfies $p^{e-1}(p-1)=2h$ for any $e\ge2,$ and hence
$$
p(p-1)\le2h,
$$
with $p < \sqrt{2h}+1.$ 

There is at most one such odd prime $p$.  Indeed, if distinct odd
primes $p<q$ with exponents $e,f\ge2$ satisfied
$$
p^{e-1}(p-1)=q^{f-1}(q-1),
$$
then $q$ would divide $p^{e-1}(p-1)$.  Since $q\ne p$, this would
force $q\mid p-1$, impossible because $q>p-1$.
Finally, the $2$-adic exponent can increase by at most one at a fixed
threshold.  Thus $C_h$ is bounded by the product of at most the three
factors $2, 2h+1, \sqrt{2h}+1,$ which proves the claim.
\end{proof}

For later finite verification, the first universal jumps are
$$
(C_1,\ldots,C_{14})
=
(6,10,21,2,11,13,1,34,57,5,23,1,1,29).
$$

\begin{theorem}\label{thm:positivity}
For every admissible $j$, the integer $\lambda_j$ in
Proposition~\ref{prop:canonical} is positive.  Consequently,
$$
q_jb_j\in\langle a,b_1,\ldots,b_{j-1}\rangle.
$$
\end{theorem}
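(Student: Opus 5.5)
Since Proposition~\ref{prop:canonical} gives
\[
\lambda_j a = q_jb_j-\sum_{h\mid j,\ h<j} r_{j,h}b_h ,
\]
positivity of $\lambda_j$ amounts to the inequality
\[
\sum_{h\mid j,\,h<j} r_{j,h}b_h< q_jb_j .
\]
If $q_j=1$ the statement $q_jb_j\in\langle a,b_1,\ldots,b_{j-1}\rangle$ is trivial in the sense that $b_j$ itself is already a generator. In that case all digits still satisfy $r_{j,h}\le q_h-1$, and the same estimate below works with $q_j\ge1$. So I will prove the displayed inequality directly, using only $q_j\ge1$. It then suffices to show
\[
\sum_{h\mid j,\,h<j}(q_h-1)\frac{b_h}{b_j}<1 .
\]
For $j=1$ the sum is empty, and $\lambda_1a=q_1b_1>0$.

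\textbf{Step 1: reduction to a numerical inequality.} For each proper divisor $h$ of $j$, I will use $q_h-1<C_h\le 2(2h+1)(\sqrt{2h}+1)$ from Lemma~\ref{lem:C-bound}. I will also use the growth bound
\[
\frac{b_h}{b_j}<\frac{\binom{2j}{2h}}{(2j-2h)!}
\]
from Corollary~\ref{cor:stirling-growth}. Since $h\le j/2$, we have $2j-2h\ge j$. The goal is then
\[
\Sigma_j:=\sum_{h\mid j,\,h<j} 2(2h+1)(\sqrt{2h}+1)\,\frac{\binom{2j}{2h}}{(2j-2h)!}<1
\]
for all sufficiently large $j$. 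Writing $\binom{2j}{2h}/(2j-2h)!=(2j)!/\bigl((2h)!\,((2j-2h)!)^2\bigr)$ and setting $k=2j-2h\ge j$, each term is at most
\[
\frac{(2j)^{k}}{(k!)^2}\cdot O(h^{3/2}).
\]
Since $k\ge j$, Stirling's formula gives $(2j)^k/(k!)^2\le (2je^2/k^2)^k\le (2e^2/j)^{j}$. The number of divisors is at most $j$, so $\Sigma_j\to0$ superexponentially. The explicit crossover should be around $j\ge 16$ to $20$; I will compute the crossover precisely with crude but explicit bounds.

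\textbf{Step 2: finite verification for small $j$.} This is the main obstacle. For small $j$ the bound $\binom{2j}{2h}/(2j-2h)!$ is weak; for example $j=2$, $h=1$ gives $6/2=3$, which is far from small. Two refinements are available here. First, use the exact jumps $(C_1,\ldots,C_{14})$ listed above, and keep the sharper intermediate bound $e_{2(j-h)}(1,\ldots,n-1-2h)/\binom{2j}{2h}$ from Corollary~\ref{cor:stirling-growth}. Since $j\le s$ forces $n\ge 2j+1$, the parameter $n-1-2h\ge 2j-2h$ is controlled. Second, note that $r_{j,h}\ne0$ requires $h\mid j$, which kills most terms, and $q_h$ is often $1$ for the actual $a$. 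When $j$ is prime, only $h=1$ survives, with $r_{j,1}\le q_1-1\le5$. Then I need $5b_1<b_j$, which follows from the sharper bound $b_j/b_1>e_{2j-2}(1,\ldots,n-3)/\binom{2j}{2}$ for $j\ge2$ once $n$ is not tiny. Any remaining small pairs $(n,j)$ with $n$ close to $2j+1$ I would settle by direct computation of the Stirling numbers, for $n$ up to a modest bound such as $n\le 30$.

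\textbf{Step 3: conclusion.} Once the inequality holds, $\lambda_ja>0$, so $\lambda_j\ge1$. Therefore $q_jb_j=\lambda_ja+\sum r_{j,h}b_h$ with all coefficients nonnegative, which gives $q_jb_j\in\langle a,b_1,\ldots,b_{j-1}\rangle$.
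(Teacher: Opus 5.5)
Your route is essentially the paper's. You reduce positivity of $\lambda_j$ to bounding $A_j=\sum_{h\mid j,\,h<j}r_{j,h}b_h$. For large $j$ you use Lemma~\ref{lem:C-bound} with Corollary~\ref{cor:stirling-growth}. For small $j$ you use the exact jumps $C_h$, the $n$-dependent bound $e_{2(j-h)}(1,\dots,n-1-2h)/\binom{2j}{2h}$, and direct computation for the few small $n$ left over. For $j=2,3,4,6$ these checks are unavoidable, since the $n$-independent bound fails there; for example, the $h=3$ term for $j=6$ is $20\binom{12}{6}/6!>25$.

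There is one concrete error. You discard $q_j$ and commit to proving $\sum_{h\mid j,\,h<j}(q_h-1)b_h/b_j<1$ for every admissible $(n,j)$, and this is false. For $n=5$, $j=2$ one has $a=10$, $b_1=\stirling{5}{3}=35$, $b_2=\stirling{5}{1}=24$, $q_1=2$ and $q_2=5$, so $(q_1-1)b_1/b_2=35/24>1$. What is actually needed is $A_j<q_jb_j$, and here it holds ($35<120$). Indeed $5b_2=120=12a$, which is exactly how the paper handles this case. So in the direct verification you must keep the factor $q_j$, or compute $\lambda_j$ itself, rather than your stronger inequality.

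Two smaller points.

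\begin{itemize}
\item Your remark that the case $q_j=1$ is trivial is wrong. $b_j$ is not among $a,b_1,\dots,b_{j-1}$, and this case is precisely what Theorem~\ref{thm:telescopic} needs in order to delete inactive generators. Your argument never uses $q_j>1$, so it already covers this case; just drop the remark.
\item Your Stirling-formula estimate $(2e^2/j)^j$ is valid but much weaker than the paper's bound $\binom{2j}{2h}/(2j-2h)!\le 4^j/j!$, which follows from $\binom{2j}{2h}\le 4^j$ and $(2j-2h)!\ge j!$. At $j=15$ your per-term factor is still about $0.8$. With your prefactors the sum drops below $1$ only around $j\approx 22$, not at $16$--$20$. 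This is harmless but enlarges the finite table. The listed $C_1,\dots,C_{14}$ still suffice, since proper divisors of $j\le 21$ are at most $10$.
\end{itemize}
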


\begin{proof}
By Proposition~\ref{prop:canonical},
$$
q_jb_j=\lambda_ja+A_j,
$$
with
$$
A_j:=
\sum_{\substack{h\mid j\\h<j}}r_{j,h}b_h.
$$
Since $0\le r_{j,h}<q_h$,
$$
A_j\le
\sum_{\substack{h\mid j\\h<j}}(q_h-1)b_h.
$$
It suffices to prove $A_j<q_jb_j$; in most cases we prove the stronger
inequality $A_j<b_j$.

For $5\le j\le14$, $j\ne6$, Corollary~\ref{cor:stirling-growth}
gives
$$
\frac{A_j}{b_j}
<
E_j:=
\sum_{\substack{h\mid j\\h<j}}
(C_h-1)
\frac{\binom{2j}{2h}}{(2j-2h)!}.
$$
Direct substitution of the universal jumps $C_h$ for the proper divisors $h\mid j$ gives the following exact rational values:
$$
\begin{array}{c|c}
j&E_j\\ \hline
5&5/896\\
7&13/13685760\\
8&57979127/181621440\\
9&360389137/464950886400\\
10&40746147528721/80029671321600\\
11&1/2106408665088000\\
12&93315263272827703/1377451872276480000\\
13&1/381814401066608885760\\
14&233/784491637734604800000
\end{array}
$$
and every entry is $<1$.  Hence $A_j<b_j$.

Now let $j\ge15$.  Every proper divisor $h\mid j$ satisfies
$h\le j/2$, and Corollary~\ref{cor:stirling-growth} yields the crude
uniform bound
$$
\frac{b_h}{b_j}<\frac{4^j}{j!}.
$$
By Lemma~\ref{lem:C-bound},
$$
q_h\le2(j+1)(\sqrt j+1)
$$
with $h\le j/2$. There are at most $j/2$ proper positive divisors, so
$$
\frac{A_j}{b_j}
<
G(j):=
\frac{j(j+1)(\sqrt j+1)4^j}{j!}.
$$
Direct calculation yields,
$$
G(15)=0.960299\ldots<1,
$$
and
$$
\frac{G(j+1)}{G(j)}
=
\frac{4(j+2)}{j(j+1)}
\frac{\sqrt{j+1}+1}{\sqrt j+1}
<1
\qquad(j\ge15).
$$
Thus $A_j<b_j$ for every $j\ge15$.

Now, we treat the remaining cases: $j=1,2,3,4,6$.

If $j=1$, there is no remainder term, and $q_1b_1=\lambda_1a$ with $\lambda_1>0$.

For $j=2$, $A_2\le5b_1.$
If $n\ge7$, then we have
$$
\frac{b_2}{b_1}
>
\frac{e_2(1,\ldots,n-3)}6
\ge\frac{e_2(1,2,3,4)}6
=\frac{35}{6}>5.
$$
For the remaining cases, 
$$
n=5:\quad 5b_2=120=12a,\qquad\text{and}\qquad n=6:\quad 5b_2=1370=80a+2b_1.
$$

For $j=3$, $A_3\le5b_1.$ If $n\ge8$, then we have
$$
\frac{b_3}{b_1} > \frac{e_4(1,\ldots,n-3)}{15}\ge\frac{274}{15}>5.
$$
For $n=7, 7b_3=5040=240a.$

For $j=4$, $A_4\le5b_1+9b_2$.  Although $b_3$ does not occur in
$A_4$ (since $3\nmid4$), we use it as an intermediate comparison
term.  For $n\ge11$, one has $b_2>5b_1$, and
$$
\frac{b_3}{b_2}
>
\frac{e_2(1,\ldots,6)}{15}
=
\frac{175}{15}>10,
$$
so
$$
b_3>9b_2+5b_1.
$$
Furthermore,
$$
\frac{b_4}{b_3}
>
\frac{e_2(1,2,3,4)}{28}
=
\frac{35}{28}>1.
$$
Hence $b_4>A_4$.  For $n=10$,
$$
b_4=1026576>
5(870)+9(63273)=573807,
$$
and for $n=9$,
$$
b_4=40320=1120a.
$$

Finally let $j=6$.  Then
$$
A_6\le5b_1+9b_2+20b_3.
$$
For $n\ge15$, the preceding estimates give
$$
b_3>5b_1+9b_2,
$$
and hence $A_6<21b_3$.  Also
$$
\frac{b_6}{b_3}
>
\frac{e_6(1,\ldots,n-7)}{\binom{12}{6}}
>
\frac{3\cdot4\cdot5\cdot6\cdot7\cdot8}{924}
=
\frac{240}{11}>21.
$$
Thus $A_6<b_6$.

For $n=13$, $q_6=13, b_6=12!,$ and
$$
13b_6=13!=6227020800 = 79833600a.
$$
For $n=14$,
$$
(q_1,\ldots,q_6)=(1,1,7,1,1,13),
$$
so $A_6\le6b_3$, while
$$
\frac{q_6b_6}{b_3}
>
13\frac{2\cdot3\cdot4\cdot5\cdot6\cdot7}{924}
=
13\frac{60}{11}>6.
$$
Hence $q_6b_6>A_6$.

Thus $\lambda_j>0$ in every case.
\end{proof}

\begin{proof}[Proof of Theorem~\ref{thm:telescopic}]
Let
$$
j_1<j_2<\cdots<j_t
$$
be precisely the indices for which $q_{j_r}>1$.
If $q_j=1$, Theorem~\ref{thm:positivity} gives
$$
b_j\in\langle a,b_1,\ldots,b_{j-1}\rangle,
$$
so inactive generators can be removed.  Hence
$$
S_n=\langle a,b_{j_1},\ldots,b_{j_t}\rangle.
$$

For every active index, Theorem~\ref{thm:positivity} gives
$$
q_{j_r}b_{j_r}
\in
\langle a,b_{j_1},\ldots,b_{j_{r-1}}\rangle.
$$
The gcd quotients of this reduced sequence are
$q_{j_1},\ldots,q_{j_t}$, all greater than $1$.
Therefore the reduced sequence is telescopic in the standard sense;
see \cite{Rosales2009,Shor2019}.
\end{proof}
\begin{proof}[Proof of Corollary~\ref{cor:apery-frobenius}]
The standard rectangular Ap\'ery-set description for a telescopic
generating sequence (see, e.g.,
\cite{Rosales2009,Shor2019}) gives
$$
\Ap(S_n,a)
=
\left\{
\sum_{r=1}^{t}x_rb_{j_r}:
0\le x_r<q_{j_r}
\right\}.
$$
Since $q_j=1$ forces $x_j=0$, this is equivalently
$$
\Ap(S_n,a)
=
\left\{
\sum_{j=1}^{s}x_jb_j:
0\le x_j<q_j
\right\}.
$$
Its maximum is
$$
\max\Ap(S_n,a)
=
\sum_{j=1}^{s}(q_j-1)b_j.
$$
By the standard relation between the Frobenius number and
the Ap\'ery set \cite{Rosales2009},
$$
F(S_n)=\max\Ap(S_n,a)-a.
$$
Therefore,
$$
F(S_n)
=
\sum_{j=1}^{s}(q_j-1)b_j-a,
$$
and hence
$$
F(S_n)=
\sum_{j=1}^{\lfloor(n-1)/2\rfloor}
(q_j-1)\stirling n{n-2j}
-\binom n2.
$$
\end{proof}
\begin{proof}[Proof of Corollary~\ref{cor:structural-invariants}]
By Theorem~\ref{thm:telescopic}, after deleting the inactive generators
with $q_j=1$, the resulting generating sequence is telescopic.  Hence
$S_n$ is free with respect to this telescopic arrangement; see
\cite{Rosales2009,Shor2019}.

We next derive symmetry directly from the rectangular Ap\'ery set.
Since $d_0=a$ and $d_s=1$,
\[
\prod_{j=1}^s q_j=\frac{d_0}{d_s}=a.
\]
By Corollary~\ref{cor:apery-frobenius}, each element of
$\Ap(S_n,a)$ is represented uniquely by digits $0\le x_j<q_j$.
Therefore, summing coordinatewise over the rectangular Ap\'ery set,
\[
\frac1a\sum_{w\in\Ap(S_n,a)}w
=\frac12\sum_{j=1}^s(q_j-1)b_j.
\]
The standard Ap\'ery-set formula for the genus gives
\[
\mathrm g(S_n)
=\frac1a\sum_{w\in\Ap(S_n,a)}w-\frac{a-1}{2}.
\]
Consequently,
\[
\mathrm g(S_n)
=\frac12\left(\sum_{j=1}^s(q_j-1)b_j-a+1\right)
=\frac{F(S_n)+1}{2}.
\]
The equality $\mathrm g(S_n)=(F(S_n)+1)/2$ characterizes symmetric
numerical semigroups, so $S_n$ is symmetric and hence has type one;
see \cite{Rosales2009}.  Finally, $c(S_n)=F(S_n)+1$ for every numerical
semigroup.  Substituting the formula for $F(S_n)$ from
Corollary~\ref{cor:apery-frobenius} gives the displayed formulas for
$\mathrm g(S_n)$ and $c(S_n)$.
\end{proof}

\section{Future work}   

As a generalized version of Frobenius numbers, $p$-Frobenius numbers are well-known (see, e.g., \cite{Ko-p}).  For example, when $n=7$, for $0\le p\le 9$, we have    
\begin{align*}
	g_p(S_7)&=(3 p+2)\stirling{7}{5}+6\stirling{7}{1}-\stirling{7}{6}\\
	&=(3 p+2)\cdot 175+6\cdot 720-21=4649+525 p. 
\end{align*}
However, it does not seem easy to formulate $p$-Frobenius numbers for a general $n$. Even in the case of binomial coefficients (\cite{HwangSong26}), it seems pretty difficult.

\end{document}